\newcommand{\BN}{{\mathbb{N}}}
\newcommand{\BR}{{\mathbb{R}}}
\newcommand{\BC}{{\mathbb{C}}}
\newcommand{\BP}{{\mathbb{P}}}
\newcommand{\BE}{{\mathbb{E}}}
\newcommand{\gd}{\delta}
\newcommand{\gb}{\beta}
\newcommand{\Ga}{\Gamma} 
\newcommand{\gO}{\Omega}
\newcommand{\gl}{\lambda}
\newcommand{\ga}{\alpha}
\newcommand{\dd}{{\partial}}
\def\vp{\varphi}
\def\e{\varepsilon}
\newcommand{\ol}[1]{\overline{#1}}
\newcommand{\tprob}{\text{Prob}}
\newcommand{\diam}{\text{diam}}
\newcommand{\df}{\stackrel{\text{def}}{=}}
\newcommand{\half}{\frac{1}{2}}
\newtheorem{prop}{Proposition}[section]
\newtheorem*{prop*}{Proposition}
\newtheorem{thm}{Theorem}
\newtheorem*{thm*}{Theorem}
\newtheorem{lem}[prop]{Lemma}
\newtheorem{cor}[prop]{Corollary}
\newtheorem*{cor*}{Corollary}
\theoremstyle{definition}
\newtheorem{defn}[prop]{Definition}
\newtheorem*{defn*}{Definition}
\newtheorem{rem}[prop]{Remark}
\newtheorem*{rem*}{Remark}
\newtheorem{exam}[prop]{Example}
\title[Special affine representations for hyperbolic groups.]{On the special affine representations for hyperbolic groups.}
\begin{document}
\maketitle
\centerline{\scriptsize KEVIN BOUCHER}
\centerline{\tiny University of Southampton, Highfield, Southampton SO17 1BJ, Great Britain}
\centerline{\tiny (e-mail: k.l.boucher@soton.ac.il)}

\begin{abstract}
In this paper we extend the construction of special representations \cite{MR2328257} \cite{MR2000965} to Gromov hyperbolic groups which admits complementary series in the sense of \cite{Boucher:2020aa}.
We prove that these representations have a natural non-trivial reduced cohomology class $[c]$.
An analogue of Kuhn-Vershik's formula \cite{MR2000965} is established and as a by-product a characterisation of hyperbolic groups that admit complementary series.
Investigating dynamical properties of the cohomology class $[c]$ we prove an cocycle equidistribution theorem \'a la Roblin-Margulis \cite{MR2057305} \cite{MR2035655} (see also  \cite{Garncarek:2014aa} \cite{MR3939562} \cite{Boucher:2020ab}) and deduce the irreducibility of the associated affine actions \cite{Neretin:aa} \cite{MR3549540} \cite{MR3711276}.
The irreducibility of the affine actions associated to the canonical class $[c]$ is original even in the case of uniform lattices in $SO(n,1)$, $SU(n,1)$ or $SL_2(\mathbb{Q}_p)$ with $n\ge 1$ and $p$ prime.
\end{abstract}

{\tiny \textbf{Key words:} Affine actions, conditionally negative functions, complementary series, harmonic cocycles, Knapp-Stein operators.\\
2020 Mathematics Subject Classification: 37A46 20F67 22D10}

\section{Introduction}
Even though most discrete groups  do not admit interesting isometric affine actions on Hilbert spaces \cite{MR1408975} some of those acting sufficiently well on negatively curved spaces appear to have interesting ones \cite{MR578893} \cite{MR3309999} \cite{MR2328257}.
Given a rank 1 Lie group $G$  and $X=G/P$ its symmetric space  where $P$ stands for a minimal parabolic subgroup, a remarkable example can be constructed from the natural action of $G$ on its top degree forms with zero average, $\gO_0^\text{top}(X)$.
This space can be endowed with a natural invariant quadratic form $Q'$ and a canonical proper cocycle (for the form $Q'$) associated to the extension:
\begin{equation}\label{eq:num1}
0\rightarrow \gO_0^\text{top}(X)\rightarrow \gO^\text{top}(X)\rightarrow \BC\rightarrow 0
\end{equation}
When $G=SO_o(n,1)$ or $SU(n,1)$ it was proved in \cite{MR245725} that this quadratic form is positive definite and the representation obtained might be seen as a limit of complementary series in an appropriate sense \cite{MR2328257}.
As shown in \cite[Chap. 5]{MR578893}, this representation have a particular role in cohomology theory of rank 1 Lie groups.

\begin{rem}
In the case of rank 1 Lie groups with the Kazhdan property (T) \cite{MR2415834} such as $G=Sp(n,1)$ for $n\ge 2$, a Hilbert structure on $\gO_0^\text{top}(X)$ inducing a uniformly bounded representation of $G$ with non-trivial cohomology was built  in \cite{Nishikawa:2020aa}. 
\end{rem}

The main purposes of this work are the extension of this construction to the class of Gromov hyperbolic groups and the investigation of its intrinsic properties and relations with the boundary complementary series representations introduced in \cite{Boucher:2020aa}:

\subsection{Statement of the results.}\label{subsec:intro:res}\hfill\break
The reader can refer to Section \ref{sec:prelim} for details about the definitions.\\
Let $(\Ga,d_\Ga)$ be a discrete finitely generated Gromov hyperbolic endowed with a strong hyperbolic distance quasi-isometric to a word distance.
The group $\Ga$ is supposed to be non-elementary, i.e non-virtually abelian and its critical exponent is denoted $\gd$.
Let $(\dd \Ga,d)$ be its Gromov boundary endowed with a $\Ga$-conformal visual distance induced by $d_\Ga$, $D$ the Hausdorff dimension of $(\dd \Ga,d)$ and $\nu=\nu_e$ the Patterson-Sullivan measure at $e\in \Ga$.

\begin{rem}
As a first approximation one can think of $(\Ga,d_\Ga)$ as a Gromov hyperbolic group endowed with a word distance.
\end{rem}

Given $s\in\BR$ the space of {\it $s$-densities }refers to the linear representation by bounded operators on the space of continuous function, $C(\dd \Ga)$, defined as:
$$\pi_s(g)\vp(\xi)=e^{-s\gd b_\xi(g.o,o)}\vp(g^{-1}\xi)$$
where $b$ stands for the Busemann cocycle, $g\in \Ga$, $\vp\in C(\dd \Ga)$ and $\xi\in\dd \Ga$.
Those representations extend to $L^2(\dd \Ga,\nu)$ as linear actions by bounded operators with $\pi_s^*=\pi_{1-s}$.\\
For $s>\half$ the {\it Knapp-Stein operator} $I_s$ on $L^2(\dd \Ga, \nu)$ is the operator defined as:
$$I_s:L^2(\dd \Ga, \nu)\rightarrow L^2(\dd \Ga, \nu),\quad \vp\mapsto \int_{\dd \Ga}\frac{\vp(\xi)}{d^{2D(1-s)}(\xi,.)}d \nu(\xi)$$
The operator $I_s$ with $s>\half$ is well defined, intertwines the representation $\pi_s$ with $\pi_{1-s}$ and thus induces a unitary representation whenever it is positive definite as an operator on $L^2(\dd \Ga, \nu)$.\\

Analogously to the rank 1 case we construct another potential operator, $I'$, on $L^2(\dd \Ga,\nu)$ as:
$$I':L^2(\dd \Ga, \nu)\rightarrow L^2(\dd \Ga, \nu),\quad \vp\mapsto \int_{\dd \Ga}\vp(\xi)(\xi,.)d \nu(\xi)$$
The operator $I'$ can be thought as the infinitesimal generator of the $(I_s)_{0< s\le 1}$ and induces an intertwiner between the $\pi_1$-sub-representation on $L^2_0(\dd \Ga, \nu)\df L^2(\dd \Ga, \nu)\ominus\BC$ and its dual the $\pi_0$-representation on $L^2(\dd \Ga, \nu)/\BC$.
In particular $I'$ defines a positive and $\pi_1$-invariant quadratic form on the space of 1-densities with zero average whenever $I'$ is positive as an operator on $L^2_0(\dd \Ga, \nu)$ analogously as the quadratic form $Q'$ mentioned earlier. \\
Moreover an exact sequence similar as \ref{eq:num1} produces a canonical $\Ga$-equivariant cocycle with coefficients in $\pi_1$:
$c:\Ga\times \Ga\rightarrow L^2_0(\dd \Ga, \nu)$ with the following property:

\begin{thm}\label{thm:main1}
Assume the operator $I'$  is positive definite on $L^2_0(\dd \Ga, \nu)$.
Then the associated unitary representation, $(\pi_1,H_1')$, is mixing and $c$ has non-trivial reduced cohomology class.
\end{thm}
Following a similar terminology as in \cite{MR2328257} we call $(\pi_1,H_1')$ and its dual $(\pi_0,H_0')$ (see Corollary \ref{cor:dual'}) {\it special representations}.\\

A $\Ga$-invariant distance, $d$, on $\Ga$ is called {\it $\Ga$-roughly conditionally negative} if there exist a isometric affine action on a Hilbert space $H$ of cocycle $b:\Ga\rightarrow H$, $t>0$ and $C\ge 0$ such that $|t.d(g,h)-\|b(g)-b(h)\|_H^2|\le C$ for all $g,h\in \Ga$.

\begin{exam}
The ambient distance on a uniform lattice, $\Ga$, in $SO(n,1)$ or $SU(m,1)$ for $n\ge3$ and $m\ge 2$ gives an example of $\Ga$-roughly conditionally negative (see \cite[Thm. 3.9]{MR2346274}).
\end{exam}

\begin{thm*}[\cite{Boucher:2020aa} Theorem 1]
If the distance $d_\Ga$  is \textit{$\Ga$-roughly conditionally negative}, then the operators $I_s$ with $s>\half$ are positive on $L^2(\dd \Ga, \nu)$.
\end{thm*}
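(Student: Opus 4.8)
The plan is to reduce the positivity of $\mathcal{I}_s$ to the positive-definiteness of its integral kernel, and then to exhibit that kernel as the exponential of a positive-definite kernel on $\dd X$. Write $\beta=2\gd(1-s)$, which for $\half<s<1$ satisfies $0<\beta<\gd$; this is exactly the range in which the kernel is $\mu_o$-integrable and the complementary series lives (the endpoint $s=1$ gives the constant kernel $\equiv 1$). Recalling that the Bourdon visual metric is $d(\xi,\eta)=e^{-(\xi|\eta)_o}$, where $(\xi|\eta)_o$ denotes the Gromov product, the kernel of $\mathcal{I}_s$ is $d(\xi,\eta)^{-\beta}=e^{\beta(\xi|\eta)_o}$. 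Since this kernel is real and symmetric, $\form{\mathcal{I}_s v}{v}=\int_{\dd X}\int_{\dd X}e^{\beta(\xi|\eta)_o}v(\xi)\overline{v(\eta)}\,d\mu_o(\xi)\,d\mu_o(\eta)\ge 0$ for all $v$ if and only if $(\xi,\eta)\mapsto e^{\beta(\xi|\eta)_o}$ is a positive-definite kernel on $\dd X$.

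First I would linearise the hypothesis. By Schoenberg's theorem the $G$-invariant conditionally negative kernel $N_0$ on $Z'$ admits a realisation $N_0(x,y)=\|\phi(x)-\phi(y)\|^2$ for some map $\phi\colon Z'\to\mathcal H$ into a real Hilbert space. Feeding the decomposition $d=N_0+r$ into
$$(\xi|\eta)_o=\lim_{x\to\xi,\ y\to\eta}\tfrac12\big(d(o,x)+d(o,y)-d(x,y)\big),$$
the polarisation identity turns the $N_0$-part into $\form{\phi(x)-\phi(o)}{\phi(y)-\phi(o)}$, while the three $r$-terms each tend to $\k=\k(G.o)$ as the points escape to the boundary inside $Z'\supset\mathcal{Q}(\Lambda(G))$. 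Hence
$$(\xi|\eta)_o=q(\xi,\eta)+\tfrac{\k}{2},\qquad q(\xi,\eta):=\lim_{x\to\xi,\ y\to\eta}\form{\phi(x)-\phi(o)}{\phi(y)-\phi(o)}.$$

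The point of this identity is that $q$ is \emph{positive-definite}: for any finite family $\xi_1,\dots,\xi_n\in\dd X$ and approximating sequences $x_i^{(k)}\to\xi_i$, the Gram matrices $\big[\form{\phi(x_i^{(k)})-\phi(o)}{\phi(x_j^{(k)})-\phi(o)}\big]_{i,j}$ are positive semidefinite, and this property passes to the limit, so $[q(\xi_i,\xi_j)]_{i,j}\succeq 0$. Consequently, for $\beta>0$ the kernel $\beta q$ is positive-definite and, expanding $e^{\beta q}=\sum_n (\beta q)^n/n!$ as a series of Hadamard powers, so is $e^{\beta q}$ by the Schur product theorem. Therefore
$$d(\xi,\eta)^{-\beta}=e^{\beta\k/2}\,e^{\beta q(\xi,\eta)}$$
is a positive constant times a positive-definite kernel, which yields $\form{\mathcal{I}_s v}{v}\ge 0$ for all $v\in L^2(\dd X,\mu_o)$, as desired.

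The delicate point, and the step on which I would spend the most care, is the passage to the boundary in the formula for $q$: one must verify that $\form{\phi(x)-\phi(o)}{\phi(y)-\phi(o)}$ genuinely converges as $x\to\xi$, $y\to\eta$ (equivalently that $(\xi|\eta)_o$ is well defined, which is standard in CAT($-1$) spaces) and, crucially, that the remainder $r$ tends to the single constant $\k$ along all three pairs $(o,x)$, $(o,y)$, $(x,y)$ simultaneously. This is where the hypothesis that $r$ has a limit at infinity on each orbit is used in full, together with the inclusion $\mathcal{Q}(\Lambda(G))\subset Z'$ ensuring that the approximating points may be chosen inside $Z'$; the $G$-invariance of $N_0$ and the uniformity of the limit of $r$ on the orbit $G.o$ guarantee that the constant $\k$ appearing in the identity is independent of the chosen sequences. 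Once this convergence is secured, the remainder is the formal positive-definiteness calculus above.
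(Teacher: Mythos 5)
First, a caveat on the comparison: the paper under review does not prove this statement at all --- it is quoted verbatim as Theorem~1 of the reference \cite{Boucher:2020aa} and used as a black box --- so I can only measure your argument against what a complete proof requires. Your strategy (Schoenberg realisation $N_0(x,y)=\|\phi(x)-\phi(y)\|^2$, polarisation inside the Gromov product, passage to the boundary, exponentiation of a positive-definite kernel) is the natural one and almost certainly the intended route, and your boundary-limit computation $\langle\xi,\eta\rangle_o=q(\xi,\eta)+\kappa/2$ is correct for $\xi\neq\eta$ with both points approached along the orbit $G.o$.

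There is, however, a genuine gap at the last step. The kernel of $\mathcal{I}_s$ is $e^{\beta\langle\xi,\eta\rangle_o}$ with $\beta=2\gd(1-s)$, and $\langle\xi,\xi\rangle_o=+\infty$; likewise $q(\xi,\xi)=\lim_k N_0(o,x^{(k)})=+\infty$. So the assertion ``$[q(\xi_i,\xi_j)]_{i,j}\succeq 0$'' is vacuous (the quadratic form is $+\infty$ whenever some $c_i\neq 0$), the Hadamard powers and the Schur product theorem cannot be applied to $q$ as written, and, most importantly, pointwise positive-definiteness of a kernel that is $+\infty$ on the diagonal does not by itself give $\form{\mathcal{I}_s v}{v}\ge 0$ on $L^2(\dd X,\mu_o)$: the feature map $\xi\mapsto \mathrm{Exp}(\sqrt{\beta}\,\Phi(\xi))$ one would need has infinite norm at every point, so there is no limiting identity $\int\!\!\int K\,v\bar v=\|\int v\,\mathrm{Exp}(\cdots)\,d\mu_o\|^2$. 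The missing step is to run the approximation at the level of the quadratic form rather than of the kernel: choose, measurably in $\xi\in\Lambda(G)$ and for each $n$, an orbit point $x^{(n)}_\xi\in G.o$ with $\langle x^{(n)}_\xi,\xi\rangle_o\ge n$ (the boundary retraction and the shells $S_{o,G}(n)$ provide such a selection). Then $K_n(\xi,\eta)=e^{\beta\langle\phi(x^{(n)}_\xi)-\phi(o),\,\phi(x^{(n)}_\eta)-\phi(o)\rangle}$ is a bounded measurable genuinely positive-definite kernel, whose quadratic form equals $\|\int v(\xi)\,\mathrm{Exp}(\sqrt{\beta}(\phi(x^{(n)}_\xi)-\phi(o)))\,d\mu_o(\xi)\|^2\ge 0$ (the Bochner integral converges since $\|\phi(x^{(n)}_\xi)-\phi(o)\|^2=N_0(o,x^{(n)}_\xi)$ is bounded for fixed $n$, using that $r=d-N_0$ is bounded on $G.o\times G.o$ by $G$-invariance plus convergence to $\kappa$); it converges $\mu_o\otimes\mu_o$-a.e.\ (i.e.\ off the diagonal) to $e^{-\beta\kappa/2}d_o(\xi,\eta)^{-\beta}$ by your computation, and it is dominated up to a constant by $d_o(\xi,\eta)^{-\beta}$, which is integrable against $|v|\otimes|v|$ by the Ahlfors regularity bound $\sup_\xi\int d_o^{-\beta}(\xi,\eta)\,d\mu_o(\eta)<\infty$, valid precisely because $\beta<\gd$. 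Dominated convergence then closes the argument. Two smaller points: the ``uniformity'' of the limit of $r$ is not actually needed (only that all three distances tend to infinity, which holds off the diagonal), and your proof says nothing about $s>1$, where $\beta<0$; the meaningful range of the statement is $s\in(\half,1]$.
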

It appears that this result is tight.
Indeed an explicit computation of the cocycle's norm given by Theorem \ref{thm:main1} gives an analogue of Kuhn Vershik's formula \cite[Thm. 5]{MR2000965} and as a by-product allows us to characterise geometrically the existence of boundary complementary series representations using  \cite[Thm. 1]{Boucher:2020aa}:

\begin{thm}\label{thm:maincor}
The following properties are equivalent:
\begin{enumerate}
\item the distance $d_\Ga$ on  $\Ga$ is $\Ga$-roughly conditionally negative;
\item the family of Knapp-Stein operators $(I_s)_s$ over $L^2(\dd \Ga, \nu)$ are positive for all $s>\half$ and thus $\Ga$ has complementary series over the all interval $(\half,1]$ in the sense of \cite{Boucher:2020aa};
\item the operator $I'$ is positive over $L^2_0(\dd \Ga, \nu)$.
\end{enumerate}
\end{thm}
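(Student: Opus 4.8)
The plan is to establish the cycle of equivalences $(1)\Rightarrow(2)\Rightarrow(3)\Rightarrow(1)$, leaning on the quoted Theorem~1 of \cite{Boucher:2020aa} for the first arrow and on an explicit computation of the cocycle norm for the return arrow. The implication $(1)\Rightarrow(2)$ is essentially immediate from the cited theorem, which asserts exactly that rough conditional negativity forces positivity of $\mathcal{I}_s$ for $s>\half$; the only supplementary remark is that positivity of $\mathcal{I}_s$ on the whole range $s>\half$ is precisely what it means, in the terminology of \cite{Boucher:2020aa}, for $G$ to admit complementary series over the open interval, and that the endpoint behaviour at $s=1$ and $s=0$ follows by the duality $\pi_s^*=\pi_{1-s}$ together with a limiting argument. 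So the first step is to transcribe this carefully and record the endpoint cases.

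For $(2)\Rightarrow(3)$ the idea is that $\mathcal{I}'$ is, by construction in Section~\ref{sec:cohomology}, the infinitesimal generator of the family $(\mathcal{I}_s)_s$ at the point $s=1$, restricted to the zero-average subspace $L^2(\dd X,\mu_o)\ominus\BC$. Concretely one should write $\mathcal{I}' = \frac{d}{ds}\big|_{s=1}\mathcal{I}_s$ (after renormalizing away the singular part that acts on the constants), so that positivity of each $\mathcal{I}_s$ for $s$ slightly above $\half$ passes to the limit as $s\to 1^-$ on the orthogonal complement of the constants. The care needed here is that the kernel $d^{-2\gd(1-s)}(\xi,\bullet)$ degenerates as $s\to 1$, so one must subtract the constant eigenfunction, verify that the resulting family of quadratic forms $\langle \mathcal{I}_s v,v\rangle$ is continuous and nonnegative on $v\perp\BC$, and conclude $\langle \mathcal{I}'v,v\rangle\ge 0$ by taking the limit. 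This is a standard perturbation-of-positive-operators argument once the generator is identified.

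The substantive step is $(3)\Rightarrow(1)$, which is where the ``tightness'' claim in the text lives and where I expect the main difficulty. Here the plan is to compute explicitly the squared norm of the canonical cocycle, $\|c(o,go)\|^2_{\mathcal{I}'}=\langle \mathcal{I}'c(o,go),c(o,go)\rangle$, using the exact sequence defining $c$ together with the integral formula for $\mathcal{I}'$. Following the Kuhn--Vershik computation of \cite{MR2000965} Theorem~5, this norm should come out as an explicit expression in the Busemann cocycle and the Patterson--Sullivan measure, of the schematic form $\|c(o,go)\|^2 = \gd\, d(o,go) + (\text{bounded correction})$ up to normalization. The point is that positivity of $\mathcal{I}'$ makes this a genuine conditionally negative kernel $N_0(z,z')$ on $\mathcal{Q}(\Lambda(G))$, while the bounded correction term is exactly a kernel $r$ of the asymptotically constant type demanded in the definition of rough conditional negativity. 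Thus one reads off the decomposition $d = N_0 + r$ directly from the norm formula.

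The main obstacle will be controlling the remainder term $r$: one must show it is continuous and that along each orbit $r|_{G.o'\times G.o'}(z,z')$ converges to a finite constant $\k(G.o')$ as $d(z,z')\to\infty$. This requires asymptotic estimates on the integral $\int_{\dd X} d^{-2\gd}(\xi,\bullet)\,d\mu_o$ and its variants, using the shadow lemma and the standard comparison between the Gromov product, the visual metric, and the Busemann function on a proper CAT(-1) space; the convex-cocompactness hypothesis is what guarantees uniformity of these estimates over $\Lambda(G)$ and hence the existence of a single limiting constant per orbit. Once the norm computation and these asymptotics are in place, the equivalence closes: the decomposition furnished by $(3)$ is exactly rough conditional negativity, giving $(1)$, and combined with the forward implications the three conditions are equivalent.
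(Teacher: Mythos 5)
Your proposal follows the paper's own route exactly: the cycle $(1)\Rightarrow(2)$ via the quoted Theorem~1 of \cite{Boucher:2020aa}, $(2)\Rightarrow(3)$ via the expansion of $(\mathcal{I}_s[\vp],\vp)/(2(1-s)\gd)$ as $s\to1$ on zero-average vectors (this is Proposition \ref{prop:pos'}), and $(3)\Rightarrow(1)$ via the Kuhn--Vershik norm formula $Q_1'(c(x,y))=d(x,y)+r(x,y)$ with $r$ bounded, continuous and asymptotically constant along orbits (Proposition \ref{prop:kvg}), which exhibits $d$ as a conditionally negative kernel plus an admissible remainder. The only cosmetic discrepancies are the stray factor $\gd$ in your schematic norm formula (the paper's normalization gives coefficient $1$ on $d(x,y)$), which you already hedge as ``up to normalization.''
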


Even if every a-T-menable group acts properly on a infinite dimensional real hyperbolic space by isometries \cite{MR1253543} (which has conditionally negative distance), it is a open question whether all a-T-menable Gromov hyperbolic groups act geometrically on a proper Gromov hyperbolic space with a roughly conditionally negative distance in the above sense.\\

The cocycle $c$ enjoy similar equidistribution properties as the ones observed for the boundary representations $(\pi_s)_{s\in [0,1]}$ in \cite{Garncarek:2014aa} \cite{Boucher:2020aa} \cite{Boucher:2020ab} \cite{MR3939562}.
This led us to the following cocycle analog of the Roblin-Margulis equidistribution theorem:
\begin{thm}\label{thm:main2}
Let $(\nu_n)_n$ be a spherical approximation of the Patterson-Sullivan probability $\nu$ as in Proposition \ref{prop:equid} and $(\pi_1,H_1')$ the special representation.
Then for all $u,w\in H_1'$ and $f\in C(\ol{\Ga})$ with $ \nu(f)=0$ one has 
$$\lim_n\half\sum_\Ga f(g)(I'(c(g)+(w-\pi_1(g)w)),u)\nu_n(g)=(I'([f|_{\dd \Ga}]),u)$$
where $[f|_{\dd \Ga}]$ stands for the class of $f|_{\dd \Ga}$ in $H_1'$.
\end{thm}

\begin{rem}
Not every element in $H_1'$ can be represented by genuine function on $\dd \Ga$. In particular the use of $(I'(u),w)$ for $u,v\in H_1'$ to denote the scalar product on $H_1'$ is an abuse of notation but as justified in Appendix \ref{app:banach} $I'$ and the $L^2$-scalar product naturally extend respectively as a isometry and a coupling between $H_1'$ and its contragredient dual $H_0'$ (see Corollary \ref{cor:dual'}). 
\end{rem}

As introduced by Neretin in \cite{Neretin:aa} and investigated in \cite{MR3549540} and \cite{MR3711276}, \textit{an isometric $\Ga$-affine action on a Hilbert space $H$ is affine irreducible (AI) if  $H$ is the only closed affine subspace $\Ga$-stable.}

As a consequence of Theorem \ref{thm:main2} we obtain:
\begin{thm}\label{thm:main3}
The affine commutant of the $\Ga$-affine action $(\pi_1,H_1',c)$ is trivial.
In particular $(\pi_1,H_1',c)$ is irreducible.
\end{thm}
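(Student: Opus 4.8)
The plan is to linearise the affine action $(\pi_1,c)$, identify its affine commutant with the commutant of the associated orthogonal representation, and then use the mixing property of Theorem \ref{thm:main1} together with the weak equidistribution of Proposition \ref{prop:affequi1} to rule out every nonscalar element. Write $\mathcal{H}=L^2(\dd X,\mu_o)\ominus\BC$ with the $Q'$-unitary structure and realise the action as the linear representation
\[
\tilde\pi(g)=\begin{pmatrix}\pi_1(g)&c(g)\\0&1\end{pmatrix}\quad\text{on }\mathcal{H}\oplus\BR,
\]
so that the affine commutant is $\End_G(\tilde\pi)$. Writing a general element in block form with linear part $T$ on $\mathcal{H}$, translation part $x\in\mathcal{H}$, a functional $\phi$ on $\mathcal{H}$ and a scalar $\lambda$, commutation with every $\tilde\pi(g)$ gives four relations. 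The lower-left one forces $\phi\circ\pi_1(g)=\phi$, exhibiting $\phi$ as a $\pi_1$-invariant functional; as $\pi_1$ is mixing it has no nonzero invariant vectors, so $\phi=0$. The diagonal relation then says that $T$ commutes with $\pi_1$, and the upper-right relation reduces to the cocycle identity $(T-\lambda I)\,c(g)=(\pi_1(g)-I)\,x$ for all $g\in G$. Thus triviality of the affine commutant amounts to showing that $S:=T-\lambda I=0$ and $x=0$.

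The heart of the argument is to show $S=0$ by testing this cocycle identity against Proposition \ref{prop:affequi1}. Since $\pi_1$ is $Q'$-unitary and $S$ is bounded and commutes with $\pi_1$, its $Q'$-adjoint $S^{*}$ again commutes with $\pi_1$; for $u\in\mathcal{H}$ put $v=S^{*}u$, so that pairing the identity with $u$ yields
\[
Q'_1(c(g),v)=Q'_1(\pi_1(g)x,u)-Q'_1(x,u).
\]
I would then integrate both sides against $f(g.o)\,d\nu_{o,t}(g)$ for $f\in\cC(\ol X)$ with $\mu_o(f)=0$ and let $t\to+\infty$. On the left, Proposition \ref{prop:affequi1} gives the limit $Q'(v,[f|_{\dd X}])$. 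On the right, the constant term contributes $Q'(x,u)\int f(g.o)\,d\nu_{o,t}\to Q'(x,u)\,\mu_o(f)=0$, while the matrix-coefficient term vanishes: by mixing $|Q'_1(\pi_1(g)x,u)|\to0$ as $d(o,g.o)\to+\infty$, and the probabilities $\nu_{o,t}$ lose all mass on every ball of $X$ since their weak limit $\mu_o$ is carried by $\dd X$, so the integral tends to $0$. Hence $Q'(v,[f|_{\dd X}])=0$ for all $u\in\mathcal{H}$ and all such $f$.

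Now the restrictions $[f|_{\dd X}]$ of zero-average continuous functions are dense in $\mathcal{H}$, so $v=S^{*}u=0$ for every $u$, i.e. $S=0$ and $T=\lambda I$. Feeding this back into the cocycle identity gives $(\pi_1(g)-I)x=0$ for all $g$, so $x$ is $\pi_1$-invariant and therefore $x=0$ by mixing once more. Consequently $\End_G(\tilde\pi)=\BR\cdot\id$, which is exactly the triviality of the affine commutant; irreducibility follows because the nearest-point projection onto any proper closed $G$-invariant affine subspace would furnish a nonscalar $G$-equivariant affine map, contradicting triviality (this is the standard implication in the sense of \cite{Neretin:aa} \cite{MR3549540}).

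The algebraic reduction above is routine; the main obstacle I anticipate is analytic, namely securing the precise form of Proposition \ref{prop:affequi1} and justifying the vanishing of the mixing cross-term integrated against the spreading measures $\nu_{o,t}$. For the latter one needs the decay of the coefficients $Q'_1(\pi_1(g)x,u)$ to be controlled by $d(o,g.o)$ uniformly, together with the escape of mass of $\nu_{o,t}$ off compact subsets of $X$; combining these via an $\varepsilon$-ball truncation is what makes the cross-term negligible, and this is the step where the mixing conclusion of Theorem \ref{thm:main1} is genuinely used.
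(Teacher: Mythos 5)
Your argument is correct and rests on the same two analytic pillars as the paper's proof --- the cocycle von Neumann theorem (Proposition \ref{prop:affequi1}) and the mixing of $\mathcal{H}_1'$ (Lemma \ref{lem:mix'}) --- but the reduction is packaged differently. The paper simply invokes the Bekka--Pillon--Valette criterion: Proposition \ref{prop:affequi1} shows that for every $w$ the weak closure of ${\bf Span}\{c(g)+\dd_w(g)\}$ contains every boundary restriction $[f|_{\dd X}]$ with $\mu_o(f)=0$, hence equals $\mathcal{H}_1'$ by density of $\mathcal{F}_1^0$, and the absence of $\pi_1$-fixed vectors then upgrades ``commutant reduced to translations by fixed vectors'' to ``trivial commutant''. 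You instead linearise the action, write out the commutant equations in block form, and kill the off-diagonal operator $S$ by testing the identity $Sc(g)=(\pi_1(g)-I)x$ against Proposition \ref{prop:affequi1}; in effect you re-prove the relevant direction of the B-P-V criterion by hand, which is longer but self-contained and makes explicit where each hypothesis enters. Three minor points. First, the representation space should be the $Q_1'$-completion $\mathcal{H}_1'$ of $\mathcal{F}_1^0$ rather than $L^2(\dd X,\mu_o)\ominus\BC$, since the two norms are not equivalent a priori. Second, Proposition \ref{prop:affequi1} carries a factor $\tfrac{1}{2}$, so your left-hand limit is $2Q_1'([f|_{\dd X}],v)$; this changes nothing. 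Third, the uniformity you worry about in the cross-term is automatic: $\nu_{o,t}$ is supported on $S^*_{o,G}(t)$, which eventually leaves every compact subset of $X$, so mixing alone forces $\int_G f(g.o)Q_1'(\pi_1(g)x,u)\,d\nu_{o,t}(g)\rightarrow 0$ --- this is exactly Lemma \ref{lem:affmix} with $f_2\equiv{\bf 1}$, and no quantitative decay in $d(o,g.o)$ is needed.
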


\subsection{Organisation}$ $\\
The Section \ref{sec:prelim} is dedicated to standard material about Gromov hyperbolic geometry and Patterson-Sullivan theory.
We construct the special representations in Section \ref{sec:sperep}.
The Section \ref{sec:cohomology} is dedicated to cohomological aspects of these representations where Theorem \ref{thm:main1} and \ref{thm:maincor} are proved.
We conclude by Section \ref{sec:affine} and prove the equidistribution Theorem \ref{thm:main2} together with Theorem \ref{thm:main3}.

\subsection{Acknowledgments}$ $\\
This work was supported by the Weizmann institute of science.

\subsection{Notations and conventions}$ $\\

Given two real valued functions, $a,b$, over a set $S$, we write $a\prec b$ if there exists $C>0$ such that $a(s)\le Cb(s)$ for all $s\in S$ and $a\asymp b$ if $a\prec b$ and $b\prec a$.
Similarly we write $a\lesssim b$ if there exists $c$ such that $a(s)\le b(s)+c$ and $a\simeq b$ if $a\lesssim b$ and $b\lesssim a$.


\section{preliminaries}\label{sec:prelim}

\subsection{Hyperbolic spaces and groups}\hfill \break
The reader can refer to \cite{MR1086648} \cite{Kapovich:aa} and \cite[Chap. H]{MR1744486} for more about Gromov hyperbolic geometry.\\

Let $(X,d_X)$ be a proper metric space and $o\in X$ a fixed basepoint.\\
The space $(X,d_X)$ is \textit{$K$-roughly geodesic} for $K\ge 0$ if for all $x,y\in X$, there exist a interval $I\subset\BR$ and a map $c:I\rightarrow X$ such that
$|d_X(c(t_1),c(t_2))-|t_1-t_2||\le K$ for all $t_1,t_2\in I$.
The space $(X,d_X)$ is \textit{roughly geodesic} if $(X,d_X)$ is \textit{$K$-roughly geodesic} for some $K\ge 0$.\\
The \textit{Gromov product} of $(X,d_X)$ at $o\in X$ is defined as
$$(x,y)_o\df\half(d_X(x,o)+d_X(y,o)-d_X(x,y))$$
for $x,y\in X$ and the space $(X,d_X)$ is \textit{Gromov hyperbolic} if there exists $C\ge0$ such that
\begin{equation}\label{eq:gromov:product}
(x,y)_o\ge \min\{(x,z)_o,(z,y)_o\}-C
\end{equation}
for all $x,y,z\in X$.

Assuming $(X,d_X)$ is Gromov hyperbolic the \textit{Gromov boundary} of $(X,d_X)$ can be defined as 
$$\dd X\df\{(x_n)_n\in X^\BN:\liminf_{n,m}(x_n,x_m)_o=+\infty\}/\sim$$
with $(x_n)_n\sim(y_n)_n\Leftrightarrow\liminf_{n,m}(x_n,y_m)_o=+\infty$.
The space $\dd X$ endowed with the {\it visual topology} is compact as the space $(X,d_X)$ is proper. Moreover there exists a compatible topology on $\ol{X}=X\cup\dd X$ that makes it a compact space and every pair of points in $\ol{X}$ can be connected by a rough geodesic \cite[Chap. H]{MR1744486}.

\begin{defn}[\cite{MR3551185}]\label{def:str:hyp}
A hyperbolic metric space $(X,d_X)$ is strongly hyperbolic if there exists $\e_0>0$ such that
$$e^{-\e (x,y)_o}\le e^{-\e (x,z)_o}+e^{-\e (z,y)_o}$$
for all $x,y,z,o\in X$ and $0<\e\le \e_0$.
\end{defn}

\begin{exam}
Every CAT(-1) spaces are strongly hyperbolic \cite{MR3551185}.
\end{exam}
Assuming $(X,d_X)$ is strongly hyperbolic, the Gromov product extends continuously to the Gromov compactification $\ol{X}=X\cup \dd X$ (with $(\xi,\xi)=+\infty$ for all $\xi\in\dd X$) and a {\it visual distance} $d$ on $\dd X$ can be defined by $d(\xi,\eta)\df e^{-\e_0 (\xi,\eta)_o}$ for all $\xi,\eta\in \dd X$ where $\e_0>0$ is chosen as in Definition \ref{def:str:hyp}.
Strong hyperbolicity also implies that $\lim_{z\rightarrow \xi} d_X(x,z)-d_X(y,z)=b_\xi(x,y)=d_X(x,y)-2(x,\xi)_y$ is well defined for all $\xi\in\dd X$ and $x,y\in X$.\\

A discrete finitely generated group, $\Ga$, is Gromov hyperbolic if $\Ga$ endowed with a word distance coming from a finite generating set is Gromov hyperbolic as a metric space.
\begin{thm*}[Mineyev-Yu \cite{MR1914618} ]
Every Gromov hyperbolic group admits an invariant roughly geodesic and strongly hyperbolic distance on themselves that is quasi-isometric to a word distance.
\end{thm*}
See \cite{MR2346214} and \cite{MR3551185} for more on the subject.

\begin{exam}
If $\Ga$ is convex-cocompact subgroup of isometries of a roughly geodesic strongly hyperbolic spaces $X$.
Then the pull-back distance on an orbit of $\Ga$ induces a roughly geodesic and strongly hyperbolic distance that is quasi-isometric to a word distance.
\end{exam}

Given a group $\Ga$ acting by isometries on a strongly hyperbolic space $(X,d_X)$, the following conformal relation holds due to the continuous extension of the Gromov product on the boundary:
\begin{equation}\label{eq:conf:fond}
(gx,gy)_o=\half b_x(go,o)+\half b_y(go,o)+(x,y)_o
\end{equation}
for all $x,y\in \ol{ X}=X\cup\dd X$ and $g\in \Ga$.

Also:
\begin{align*}
d(g\xi,g\eta)&\df e^{-\e_0(g\xi,g\eta)_o}\\
&=e^{\half\e_0b_\xi(go,o)}e^{\half\e_0b_\eta(go,o)}e^{-\e_0(\xi,\eta)_o}\\
&=e^{\half\e_0b_\xi(go,o)}e^{\half\e_0b_\eta(go,o)}d(\xi,\eta)
\end{align*}
 for any $\xi,\eta\in \dd X$ and $g\in \Ga$, in other words $d$ is $\Ga$-conformal (see \cite{MR2346214}).

\subsection{The Patterson-Sullivan theory and equidistribution}\label{sec:disin}\hfill\break

Thoughout the rest $(\Ga,d_\Ga)$ denote a discrete Gromov hyperbolic group endowed with a roughly geodesic and strongly hyperbolic distance quasi-isometric to a word distance.
The group $\Ga$ is assumed to be non-virtually abelian.
The Gromov product at $e\in \Ga$ between $x,y\in \ol{\Ga}=\Ga\cup \dd \Ga$ is denoted $(x,y)$ and  the visual parameter $\e_0$ in Definition \ref{def:str:hyp} is simply denoted $\e$.

\begin{defn}
The critical exponent of $\Ga$ is defined as the infimum over all $t\ge0$ such that the Poincar\'e integral at $s$:
$$P_t\df \sum_\Ga e^{-sd_\Ga(g,e)}$$
is finite and denoted $\gd\in(0,+\infty)$.
\end{defn}

It follows from \cite[Cor. 7.3]{MR1214072} that $\Ga$ has finite critical exponent and is of divergent type, i.e. its Poincar\'e series, $P_t$, diverges at $t=\gd$.

\begin{defn}
A $\ga$-conformal density, for some positive $\ga$, is a continuous $\Ga$-equivariant map:
$$\nu_.:\Ga\mapsto \tprob(\dd \Ga);\quad g\mapsto \nu_g$$
such that for all $g,h\in \Ga$, $\nu_g\sim\nu_h$ and
$$\frac{d\nu_g}{d\nu_h}(\xi)=e^{-\ga b_\xi(g,h)}$$
for $[\nu_.]$-almost every $\xi\in\dd \Ga$ and $R\ge R_0$.
\end{defn}

As a Corollary of \cite[Thm. 7.7]{MR1214072} together with strong hyperbolicity:
\begin{thm*}
There exists an unique $\gd$-conformal density on $\Ga$.
Moreover the probability measures $(\nu_g)_g$ are ergodic and satisfy the Shadow Lemma:
$$\nu_g(\{\xi:(\xi,g)\ge d_\Ga(g,e)-R\})\asymp_g e^{\gd R}e^{-\gd d_\Ga(g,e)}$$
\end{thm*}
In the rest we denote $\nu$ the density $\nu_e$.\\

As consequence of the Shadow Lemma the measure $\nu$ is Ahlfors regular, that is:
$\nu(B(\xi,r))\asymp r^{D}$ for all $r\in[0,\diam(\dd \Ga,d)]$ and $D$ the Hausdorff dimension of $(\dd \Ga,d)$ (see \cite[Prop. 7.4]{MR1214072}).
In particular the critical exponent $\gd$ and the Hausdorff dimension $D$ are related by $\e D=\gd$ where $\e$ stands for the visual parameter in Definition \ref{def:str:hyp}.\\

Given $0\le\ga<D$ and $\xi\in\dd \Ga$, as a consequence of Ahlfors regularity one has:
\begin{align*}
\int_{\dd \Ga} e^{-\e\ga(\xi,\eta)}d\nu(\eta)&=\ga\int_0^\infty t^{\ga-1}\nu(\{e^{-\e(\xi,.)}<t^{-1}\})dt\\
&\asymp\ga\int_{\text{Diam}(\dd \Ga,d)^{-1}}^\infty t^{\ga-D-1}dt
+\ga\int_0^{\text{Diam}(\dd \Ga,d)^{-1}} t^{\ga-1}dt\\
&=\frac{\ga}{D-\ga}\text{Diam}(\dd \Ga,d)^{D-\ga}+\text{Diam}(\dd \Ga,d)^{-\ga}
\end{align*}
is finite.
In particular 
$$\sup_{\xi\in \dd \Ga}\int_{\dd \Ga} e^{\e\ga(\xi,\eta)}d\nu(\eta)<+\infty$$
for all $0\le\ga<D$.

\begin{lem}\label{lem:proj:inf}
There exists $L>0$, for all $g\in \Ga$, there exists $\xi\in \dd \Ga$ such that $d_\Ga(g,e)-L\le (g,\xi)\le d_\Ga(g,e)$.
\end{lem}
\begin{proof}
Observe that $(g,z)+(g^{-1},g^{-1}z)=d_\Ga(g,e)$ for all $z\in \ol{\Ga}$
and thus for any pair $\xi\neq\eta$ in $\dd \Ga$:
$$\max\{(g,g\xi),(g,g\eta)\}=d_\Ga(g,e)-\min\{(g^{-1},\xi),(g^{-1},\eta)\}\ge d_\Ga(g,e)-(\xi,\eta)-C$$
where $C$ comes from Equation \ref{eq:gromov:product}.
It is enough to take $L=C+(\xi,\eta)$.
\end{proof}

\begin{cor}\label{cor:unif:power}
$$\sup_{z\in \ol{ \Ga}}\int_{\dd \Ga} e^{\e\ga(z,\eta)}d\nu(\eta)<+\infty$$
for all $0\le\ga<D$.
\end{cor}
\begin{proof}
Let $L$ as in Lemma \ref{lem:proj:inf}.
Given $g\in \Ga$, let $\xi_g$ such that 
$$d_\Ga(g,e)-L\le (g,\xi_g)\le d_\Ga(g,e)$$
For all $\eta\in \dd \Ga$ one has 
$$(\xi_g,\eta)\ge \min\{(\xi_g,g),(g,\eta)\}-C\ge d_\Ga(g,e)-L-C$$ 
where $C$ comes from Equation \ref{eq:gromov:product} and thus
$$\int_{\dd \Ga} e^{-\e\ga(g,\eta)}d\nu(\eta)\prec \int_{\dd \Ga} e^{-\e\ga(\xi_g,\eta)}d\nu(\eta)\le \sup_{\xi\in \dd \Ga}\int_{\dd \Ga} e^{\e\ga(\xi,\eta)}d\nu(\eta) $$ 
\end{proof}

\subsection{Equidistribution and counting}\hfill\break
The main results of this subsection are Proposition \ref{prop:equid} and Lemma \ref{lem:tech:count} that are used in Section \ref{sec:affine} to prove the cocycle equidistribution.
Proposition \ref{prop:equid} is consequence of \cite[Thm. 3.2]{MR3939562} but as it can be proved using elementary arguments we decided to include a proof.\\

In this subsection $(\Ga,d_\Ga)$ is assumed to be $K$-roughly geodesic for a particular $K\ge0$.
We denote
$$S(R)\df\{g\in \Ga\,|\,R-K\le d_\Ga(g,e)<R+K\}$$
Given $g\in \Ga$ and $M\ge 0$ we define
$$U(g,M)\df\{\eta\in\ol{\Ga}:(g,\eta)\ge d_\Ga(g,e)-M\}$$
and 
$$O(g,M)\df\{\eta\in\dd \Ga:(g,\eta)\ge d_\Ga(g,e)-M\}=U(g,M)\cap \dd \Ga$$

\begin{prop}\label{prop:equid}
There exists a sequence of probabilities $(\nu_n)_n$ on $\Ga$ supported on $S(Kn)$ with $\nu_n(g)\prec e^{-\gd Kn}$ and
$$\sum_\Ga f(g)\nu_n(g)\xrightarrow{n} \int_{\dd \Ga}f(\xi)d\nu(\xi)$$
for all $f\in C^0(\ol{ \Ga})$.
In other words $(\nu_n)_n$ converges to $\nu$ in probability on $\ol{\Ga}$.
\end{prop}

In order to prove Proposition \ref{prop:equid} we need the following Lemma's:

\begin{lem}\label{lem:cv:labd}
There exists $R_0>0$, for all $R\ge R_0$, for all $\xi\in\dd \Ga$, there exists $g\in S(Kn)$ such that $\xi\in O(g,R)$.
\end{lem}
\begin{proof}
Let $\gamma$ be a $K$-rough geodesic between $e\in\Ga$ and $\xi\in \dd \Ga$.
One has  $h\df \gamma(Kn)\in S(Kn)\subset \Ga$ and 
$(h,\xi)=\lim_t (h,\gamma(t))\ge Kn-\frac{K}{2}$.
In other words $\xi\in O(g,R)$ for $R\ge R_0\df2K$.
\end{proof}

\begin{lem}\label{lem:equi:c0}
Let $L$ as in Lemma \ref{lem:proj:inf}.
For all $f\in C^0(\ol{\Ga})$ and $\e>0$, there exists $N$, for all $n\ge N$, $g\in S(Kn)$ and $\xi\in O(g,L)$ one has $|f(g)-f(\xi)|\le \e$.
\end{lem}
\begin{proof}
As a consequence of Lemma \ref{lem:cv:labd} $(U(g,R))_{g\in \Ga}$ forms a fundamental system of neighbourhoods of $\dd \Ga$ in $\ol{\Ga}$ for any $R\ge R_0$ (see \cite[Chap. 2]{Kapovich:aa}).
Since $f$ is continuous, for all $\xi\in \dd\Ga$, there exists $g(\xi)\in \Ga$, for all $z\in U(g(\xi),R_0+C)$, $|f(\xi)-f(z)|\le \e$, where $C$ comes from Equation \ref{eq:gromov:product}.
As $\dd \Ga $ is compact, there exists $(g(\xi_i))_{i=1,\dots k}$ such that $\dd \Ga\subset \bigcup_{i=1}^kU(g(\xi_i),R_0)$.
Let $N\ge \max_{i=1,\dots, k}d_\Ga(g(\xi_i),e)+L+K$.
Then for $n\ge N$, $g\in S(Kn)$, $\xi_g\in O(g,L)$ (see Lemma \ref{lem:proj:inf}) and $i$ with $\xi_g\in U(g(\xi_i),R_0)$ one has 
\begin{align*}
(g,g(\xi_i))&\ge \min\{(\xi_g,g(\xi_i)),(\xi_g,g)\}-C\\
&\ge \min\{d_\Ga(g(\xi_i),e)-R_0,d_\Ga(g,e)-L\}-C\\
&\ge d_\Ga(g(\xi_i),e)-R_0-C
\end{align*}
In other words $g\in U(g(\xi_i),R_0+C)$ and thus $|f(g)-f(\xi)|\le \e$.
\end{proof}

\begin{proof}[Proof of Proposition \ref{lem:cv:labd}]
Let $R_0$ as in Lemma \ref{lem:cv:labd}.\\
Write $S(Kn)=\{g_1,\dots, g_{|S_K(n)|}\}$ and define $V_n(1)\df O(g_1,R_0)$ and $V_n(i)\df O(g_i,R_0)\setminus \bigcup_{j<i} O(g_j,R_0)$ for $i>1$.\\
The probability measure $\nu_n$ is defined as:
$$\nu_n=\sum_{i=1}^k\nu(V_n(i))\text{Dir}_{g_i}$$
where $\text{Dir}_{g_i}$ denote the Dirac mass at $g_i$.
Then $\nu_n(g_i)\le \nu(O(g_i,R_0))\prec_\text{Shadow Lem.} e^{-\gd Kn}$
and given $f\in C^0(\dd \Ga)$ and $\e>0$ it follows from Lemma \ref{lem:equi:c0} that for $n$ large enough:
\begin{align*}
|\nu(f)-\nu_n(f)|&=\int_{\dd \Ga} f(\xi) d\nu(\xi)-\sum_{i=1}^{|S_R(n)|}f(g_i)\nu({U_k(n)})\\
&\le\sum_{i=1}^{|S_R(n)|}\int_{V_n(i)}|f(\xi)-f(g_i)| d\nu(\xi)
\le \e
\end{align*}
as $V_n(i)\subset O(g_i,R_0)$.

\end{proof}

\begin{lem}\label{lem:tech:count}
Let $(\nu_n)_n$ be as in Proposition \ref{prop:equid}.
Then $\sup_\xi \sum_{g\in\Ga}(\xi,g)\nu_n(g)$ is finite.
\end{lem}

In order to prove Proposition \ref{lem:tech:count} we need the following Lemma:
\begin{lem}\label{lem:counting:pro}
Let $M\ge 0$ and $n\in \BN$ such that $K(n-1)-R_0\ge M$ with $R_0$ as in Lemma \ref{lem:cv:labd}.
Then
$$|\{g\in S(Kn):(\xi,g)\ge M\}|\asymp e^{\gd Rn}e^{-\gd M}$$
\end{lem}

\begin{proof}
Let $\gamma$ be a $K$-rough geodesic between $e\in \Ga$ and $\xi\in \dd \Ga$ and $g\in \{h\in S(Kn):(\xi,h)\ge M\}$.
Then 
$$(g,\gamma(M))\ge \min\{(g,\xi),(\xi,\gamma(M))\}-C\ge M-C-K$$
and thus 
$$d_\Ga(g, \gamma(Kn))=d_\Ga(g, e)+d_\Ga(\gamma(M),e)-2(g,\gamma(Kn))
\le Kn-M+4K+C$$
,i.e $\{h\in S(Kn):(\xi,h)\ge M\}\subset B(\gamma(M),Kn-M+4K+C)$.
It follows that 
$$|\{h\in S(Kn):(\xi,h)\ge M\}|\prec e^{-\gd M}e^{\gd Kn}$$
as $|B_\Ga(g,r)|\asymp e^{\gd r}$ for all $g\in \Ga$ (see \cite[Thm. 7.2]{MR1214072}).

On the other hand 
\begin{align*}
e^{-\gd M}&\asymp_\text{Ahlfors reg.}\nu(\{\eta:(\xi,\eta)\ge M\})\\
&\le\nu(\bigcup_{g\in\{h\in S(Kn):(\xi,h)\ge M\}}O(g,R_0))\\
&\le |\{h\in S(Kn):(\xi,h)\ge M\}|e^{-\gd Kn}
\end{align*}
\end{proof}

\begin{proof}[Proof of Lemma \ref{lem:tech:count}]
Given $\xi\in \dd \Ga$ one has:
\begin{align*}
\sum_\Ga(\xi,g)\nu_n(g)&=\int_0^{d_\Ga(g,e)}\nu_n(\{(\xi,.)>r\}dr\\
&\prec \int_0^{d_\Ga(g,e)}|\{g\in S(Kn):(\xi,g)>r\}|e^{-\gd Rn} dr\\
&\asymp_{*} \int_0^{d_\Ga(g,e)}e^{-\gd r} dr\le \frac{1}{\gd}
\end{align*}
where $*$ follows from Lemma \ref{lem:counting:pro}.
\end{proof}


\section{Construction of the special representations}\label{sec:sperep}

The Hausdorff dimension of $(\dd \Ga,d)$  can be denoted $D$ or $D_\e$ to insist on the dependence on the visual parameter $\e$ (see Definition \ref{def:str:hyp}).\\
For $s\in \BR$ we denote $\pi_s$ the representation by bounded operators on $L^2(\nu)$ defined as $\pi_s(g)\vp\df e^{-sDb_\xi(g,e)}\vp(g^{-1}\xi)$ for $\vp\in L^2(\nu)$ and $\xi\in \dd \Ga$.

In this section we construct a unitary structure on the space $(L^2_0(\nu),\pi_1)$ and its dual $(L^2(\nu)/\BC,\pi_0)$  and prove preliminary results about their relations with the boundary complementary series introduced in \cite{Boucher:2020aa}.\\

A continuous map $f$ on $\ol{\Ga}$ is called {\it visually $\ga$-H\"older} for $\ga>0$ if there exists $C\ge0$ such that $|f(x)-f(y)|\le Ce^{-\e\ga(x,y)}$ for all $x,y\in \ol{\Ga}$.
In particular $f$ is continuous at every point of $\dd \Ga$ and $f|_{\dd\Ga}$ is $\ga$-H\"older for the visual metric $d$ on $\dd X$.
We denote $C_\sphericalangle^\ga(\ol{\Ga})$ the space of visually $\ga$-H\"older functions on $\ol{\Ga}$.

\begin{lem}\label{lem:int}
The operator, $I'$, defined as:
$$I':L^2(\nu)\rightarrow C_\sphericalangle^{\ga}(\ol{\Ga});\quad \vp\mapsto [z\mapsto \int_{\dd \Ga}\vp(\xi)(\xi,z)d\nu(\xi)]$$
is well defined and bounded for all $0<\ga\le \min\{1,D_\e/3\}$.
Moreover $I'$ is bounded and self-adjoint on $L^2(\nu)$.
\end{lem}
\begin{proof}
Let $0<\ga'\le \e$ such that $2\ga' <\gd $. 
Given $x,y\in \ol{\Ga}$ we define $U(x,y)\df\{\xi\in\dd\Ga:(\xi,x)<(\xi,y)\}$.
Then 
\begin{align*}
|I'(\vp)(x)-I'(\vp)(y)|
&\le \int |\vp(\xi)||(\xi,x)-(\xi,y)|d\nu(\xi)\\
&\le \frac{1}{\ga'}\int |\vp(\xi)||\int_{e^{-\ga'(\xi,x)}}^{e^{-\ga'(\xi,y)}}\frac{dt}{t}|d\nu(\xi)\\
&\le \frac{1}{\ga'}\int_{U(x,y)} \frac{|\vp(\xi)|}{e^{-\ga'(\xi,x)}}|e^{-\ga'(\xi,x)}-e^{-\ga'(\xi,y)}|d\nu(\xi)\\
&+\frac{1}{\ga'}\int_{U(x,y)^c} \frac{|\vp(\xi)|}{e^{-\ga'(\xi,y)}}|e^{-\ga'(\xi,x)}-e^{-\ga'(\xi,y)}|d\nu(\xi)\\
&\le_{\text{as $\ga'\le \e$}} \frac{1}{\ga'}\int_{\dd \Ga} \frac{|\vp(\xi)|}{e^{-\ga'(\xi,x)}}+\frac{|\vp(\xi)|}{e^{-\ga'(\xi,y)}}d\nu(\xi)e^{-\ga'(x,y)}
\end{align*}
For all $z\in\ol{\Ga}$:
\begin{align*}
 \int_{\dd \Ga} \frac{|\vp(\xi)|}{e^{-\ga'(\xi,z)}}d\nu(\xi)\le (\int_{\dd \Ga} |\vp(\xi)|^2d\nu(\xi))^\half(\int_{\dd \Ga} e^{2\ga'(\xi,z)}d\nu(\xi))^\half\le_\text{Cor. \ref{cor:unif:power}} C\|\vp\|_2
\end{align*}
and thus $I'(\vp)$ is $\ga'/\e$-Holder as $\dd \Ga$ has finite diameter.
The boundedness on $L^2(\nu)$ follows from the continuity of the inclusion $C^\ga(\dd \Ga)\subset L^2(\nu)$ for any $0<\ga$.
\end{proof}

The operator $I'$ can be seen as an intertwiner:
\begin{prop}
The operator $I'$ defines a bounded intertwiner between $(L^2_0(\nu),\pi_1)$ and $(L^2(\nu)/\BC,\pi_0)$.
In particular $I'$ induces an $\Ga$-invariant quadratic form on $L^2_0(\nu)$ given by:
$$B'(\vp,\psi)\df\int_{\dd \Ga\times\dd \Ga}\vp(\xi)\ol{\vp}(\eta)(\xi,\eta)d\nu(\xi)d\nu(\eta)$$
\end{prop}
\begin{proof}
Given any $\vp\in L^2_0(\nu)$ one has:
\begin{align*}
I'(\pi_1(g)\vp)(\eta)&=\int_{\dd \Ga}e^{-Db_\xi(g,e)}\vp(g^{-1}\xi)(\xi,\eta)d\nu(\xi)\\
&=\int_{\dd \Ga}\vp(\xi)(g\xi,\eta)d\nu(\xi)\\
&=\int_{\dd \Ga}\vp(\xi)(\xi,g^{-1}\eta)d\nu(\xi)+\half\int_{\dd \Ga}\vp(\xi)b_\xi(g^{-1},e)d\nu(\xi)\\
&=\pi_0(g)I'(\vp)(\eta)+\half\int_{\dd \Ga}\vp(\xi)b_\xi(g^{-1},e)d\nu(\xi)
\end{align*}
for almost every $\eta\in\dd \Ga$, where the third equality uses Equation (\ref{eq:conf:fond}) and the fact that $\vp$ has zero average .
In other words $I'(\pi_1(g)\vp)-\pi_0(g)I'(\vp)\in\BC$ and thus $I'(\pi_1(g)\vp)=\pi_0(g)I'(\vp)$ in $L^2(\nu)/\BC$.\\
The rest follows from the fact that $\pi_0(g)^*=\pi_1(g^{-1})$ for all $g\in \Ga$.
\end{proof}

The Knapp-Stein operators are defined for $s> \half$ as:
$$I_s(\vp)\df\int_{\dd \Ga}\frac{\vp(\xi)}{d^{2(1-s)D}(\xi,\eta)}d\nu(\xi)=\int_{\dd \Ga}\vp(\xi)e^{2(1-s)\gd(\xi,\eta)}d\nu(\xi)$$
(see \cite{Boucher:2020aa} for more details).
The following result establishes a relation between the spectrum of the operator $I'$ and the family of Knapp-Stein operators $(I_s)_s\in(\half,1]$ on $L^2(\nu)$:
\begin{prop}\label{prop:pos'}
If there exists $s_0\in (\half,1)$ such that the operators $(I_s)_{s\in(s_0,1)}$ are all positive definite on $L^2(\nu)$, then the operator $I'$ is positive definite on $L^2_0(\nu)$.
\end{prop}
In order to prove Proposition \ref{prop:pos'} we need the following lemma:
\begin{lem}\label{lem:estim:log}
For $n\ge0$  the operator:
$$K_n:L^2(\nu)\rightarrow L^2(\nu);\quad \vp\mapsto \int_{\dd \Ga}\vp(\xi)(\xi,\eta)^nd\nu(\xi)$$
is well defined and satisfies $\|K_n\|_\text{op}\prec \frac{n!}{\gd^n}$
\end{lem}
\begin{proof}
Using Schur's test it is enough to show that 
$$\sup_{\xi\in\dd \Ga}\int_{\dd \Ga} (\xi,\eta)^nd\nu(\eta)\prec \frac{n!}{\gd^n}$$
for $n\ge 0$.
The case $n=0$ is obvious so let us assume $n\ge 1$.
In this case:
\begin{align*}
\int_{\dd \Ga} (\xi,\eta)^nd\nu(\eta)&=n\int_0^\infty r^{n-1}\nu(\{(\xi,.)>r\})dr\\
&=n\int_0^\infty r^{n-1}\nu(\{d(\xi,.)<e^{-\e r}\})dr\\
&\asymp_{\gd=D_\e\e} n\int_0^\infty r^{n-1}e^{-\gd r}dr=\frac{n!}{\gd^n}
\end{align*}

where the last equality follows from:
\begin{align*}
\int_0^\infty r^{k}e^{-\ga r}dr&=\frac{-1}{\ga}[r^{k}e^{-\ga r}]_0^\infty+\frac{k}{\ga}\int_0^\infty r^{k-1}e^{-\ga r}dr\\
&=\frac{k}{\ga}\frac{(k-1)!}{\ga^{k-1}}\int_0^\infty e^{-\gd r}dr
=\frac{k!}{\ga^{k+1}}
\end{align*}
for all $k\ge 1$.

\end{proof}

\begin{proof}
As proved in Lemma \ref{lem:estim:log}, the sequence of operators
$$K_n:L^2(\nu)\rightarrow L^2(\nu);\quad \vp\mapsto \int_{\dd \Ga}\vp(\xi)(\xi,\eta)^nd\nu(\xi)$$
is well defined for $n\ge0$ and satisfies $\|K_n\|_\text{op}\prec\frac{n!}{\gd^n}$.
Observe that $K_0(\vp)=\nu(\vp)$.
Since 
$$d^{-2(1-s)D}(\xi,\eta)=e^{2(1-s)\gd(\xi,\eta)}=\sum_{k\ge0}\frac{(2(1-s)\gd)^k}{k!}(\xi,\eta)^k$$ 
on $\dd \Ga$, one has $I_s=\sum_n\frac{(2(1-s)\gd)^n}{n!}K_n$
as a limit of operators for $\half< s$.

Assuming $\vp\in L^2_0(\nu)$ and $s\in (s_0,1)$:

$$\frac{(I_s(\vp),\vp)}{2(1-s)\gd}=(I'(\vp),\vp)+2(1-s)\gd\sum_{n\ge2}\frac{(2(1-s)\gd)^{n-2}}{n!}(K_n(\vp),\vp)$$
and as 
\begin{align*}
|\sum_{n\ge2}\frac{(2(1-s)\gd)^{n-2}}{n!}(K_n(\vp),\vp)|&
\le \frac{\|\vp\|^2}{(2s-1)\gd^{2}}
\end{align*}
it follows that $(I'(\vp),\vp)\ge 0$.

\end{proof}

In the rest $H_1'$ stands for the unitary representation obtained from the Hilbert completion of $L^2_0(\nu)$ with respect to the $I'$.

\begin{cor}\label{cor:dual'}
There exists a Hilbert structure on $I'[L^2_0(\nu)]/\BC\subset L^2(\nu)/\BC$, denoted $H_0'$ that makes $\pi_0$ unitary such that the intertwiner $I':L^2_0(\nu)\rightarrow I'[L^2_0(\nu)]/\BC$ extends as an isometric intertwiner between $H_1'$ and $H_0'$.
Moreover the coupling:
 $$(u,v)\in L^2(\nu)\times L^2(\nu)\mapsto \int u(\xi)\ol{v}(\xi) d\nu(\xi)$$
  extends uniquely as a $\Ga$-invariant, non-degenerated and bounded bilinear form between $H'_1$ and $H'_0$.
\end{cor}
\begin{proof}
This is direct application of Appendix \ref{app:banach} Proposition \ref{prop:ext:equi}.
\end{proof}

We conclude this section with the following result:
\begin{lem}\label{lem:mix'}
The representations $H_1'\simeq H_0'$ are mixing, i.e their matrix coefficients vanish at infinity.
In particular $H_1'$ and $H_0'$ do not admit finite dimensional sub-representations.
\end{lem}
\begin{proof}
For all $\vp,\psi\in L^2_0(\nu)$ one has 
$$(I'(\pi_1(g_n)\vp),\psi)=(\pi_0(g_n)I'(\vp),\psi)\rightarrow I'(\vp)(\xi)({\bf 1}_{\dd \Ga},\psi)=0$$
whenever $(g_n)_n\subset \Ga$ is such that $g_n\rightarrow \xi\in\dd \Ga$ due to Lemma \ref{lem:int} (and a dominated convergence argument).
The case $u,v\in H_1'$ follows from the density of $L^2_0(\nu)$ in $H_1'$.

\end{proof}

\section{Special representations and cohomology}\label{sec:cohomology}
In this section we investigate the  cohomology of the unitary representation $H_1'\simeq H_0'$ and prove Theorem \ref{thm:main1} and \ref{thm:maincor} that will respectively follows from Corollary \ref{cor:thm2} and \ref{cor:thm1}.\\

We start by introducing some notations and recalling basic results about cohomology (see \cite{MR2415834} \cite{MR3309999} for more details):

Given a unitary representation, $(\pi,H)$, the space of cocycles for $\pi$ is defined as the subspace of maps $b:\Ga\rightarrow H$ which satisfy:
$$b(gh)=\pi(g)b(h)+b(g)$$
for all $g,h\in \Ga$ and denoted $Z^1(\Ga,\pi)$.
This space is endowed with the compact-open topology.
The subspace of co-boundary is defined as the range of operator:
$$\dd: H\rightarrow Z^1(\Ga,\pi);\quad w\mapsto \dd_w(g)=w-\pi(g)w$$
and denoted $B^1(\Ga,\pi)$.
\begin{defn}
The first cohomology and the reduced cohomology of $\Ga$ with coefficient in $\pi$ are respectively defined as the quotient spaces:
$$H^1(\Ga,\pi)\df Z^1(\Ga,\pi)/{B^1(\Ga,\pi)}$$
and
$$\ol{H}^1(\Ga,\pi)\df Z^1(\Ga,\pi)/\ol{B^1(G,\pi)}$$
where $\ol{B^1(G,\pi)}$ stands for the closer of $B^1(G,\pi)$ inside of $Z^1(G,\pi)$.
\end{defn}

Given a cocycle $b\in Z^1(\Ga,\pi)$, the formula:
$$A_{b}(g)v=\pi(g)v+b(g)$$
with $g\in \Ga$ and $v\in H$ defines an $\Ga$-affine action with linear part $\pi$ on $H$.
Conversely, any affine action with linear part $\pi$ is induced by an element of $Z^1(\Ga,\pi)$.

The following result is standard:
\begin{thm*}[\cite{MR2415834}]
Given a unitary representation $\pi$ and a cocycle $b\in Z^1(\Ga,\pi)$.
$b\in B^1(\Ga,\pi)$ if and only if its associated affine action has a fixed point or equivalently has bounded orbits.
\end{thm*}
In other words the first cohomology of $\Ga$ with coefficient in $\pi$ classifies unbounded affine actions with linear part $\pi$ up to conjugation by a affine translation.\\

In our setup the exact sequence of $\Ga$-spaces:
$$0\rightarrow L^2_0(\nu)\rightarrow L^2(\nu)\rightarrow \BC\rightarrow 0$$
induces a map:
$$c:\Ga\times \Ga\rightarrow L^2_0(\nu);\quad (x,y)\mapsto \pi_1(x){\bf1}_{\dd \Ga}-\pi_1(y){\bf1}_{\dd \Ga}$$ 
that is $\Ga$-equivariant in the sense: $c(g.x,g.y)=\pi_1(g)c(x,y)$ for all $g,x,y\in \Ga$.
In particular $g\mapsto c(g,e)=-c(e,g)$ is cocycle with coefficients in $\pi_1$ as defined above.

We investigate the properties of $c$ within the Hilbert completion, $H_1'$, of $L^2_0(\nu)$ constructed in the Section \ref{sec:sperep}.

The following relation is extensively used in the rest:
\begin{lem}\label{lem:fond'}
$$I'(c(x,y))=b_.(y,x)+\dd I'({\bf1}_{\dd\Ga})(x,y)$$
for all $x,y\in \Ga$, where $\dd I'({\bf1}_{\dd\Ga})(x,y)(\xi)\df I'({\bf1}_{\dd\Ga})(x^{-1}\xi)-I'({\bf1}_{\dd\Ga})(y^{-1}\xi)$.
In particular whenever $I'$ is positive definite on $L^2_0(\nu)$, $I'(c)$ is a cocycle in the same  cohomology class as the Busemann cocycle in $(H_0',\pi_0)$.
\end{lem}
\begin{proof}
Let $v\in L^2_0(\nu)$ and $x,y\in \Ga$.

\begin{align*}
&(I'(c(x,y)),v)=\int_{\dd \Ga\times\dd \Ga}(\xi,\eta)(e^{-Db_\xi(x,e)}-e^{-Db_\xi(y,e)})d\nu(\xi)\ol{v}(\eta)d\nu(\eta)\\
&=\int_{\dd \Ga\times\dd \Ga}(x\xi,\eta)d\nu(\xi)\ol{v}(\eta)d\nu(\eta)
-\int_{\dd \Ga\times\dd \Ga}(y\xi,\eta)d\nu(\xi)\ol{v}(\eta)d\nu(\eta)\\
&=_\text{Eq (\ref{eq:conf:fond})}\int_{\dd \Ga\times\dd \Ga}(\xi,x^{-1}\eta)+\half b_\xi(x^{-1},e)+\half b_\eta(e,x)d\nu(\xi)\ol{v}(\eta)d\nu(\eta)\\
&-\int_{\dd \Ga\times\dd \Ga}(\xi,y^{-1}\eta)+\half b_\xi(y^{-1},e)+\half b_\eta(e,y)d\nu(\xi)\ol{v}(\eta)d\nu(\eta)\\
&=_{\nu(v)=0} \half\int_{\dd \Ga}b_\eta(y,x)\ol{v}(\eta)d\nu(\eta)
+\int_{\dd \Ga}[I'({\bf1}_{\dd \Ga})(x^{-1}\eta)-I'({\bf1}_{\dd \Ga})(y^{-1}\eta)]\ol{v}(\eta)d\nu(\eta)
\end{align*}

\end{proof}

\begin{prop}\label{prop:kvg}
$$d_\Ga(x,y)\simeq (I'(c(x,y)), c(x,y))$$
for all $x,y\in \Ga$.
In particular the metric $d_\Ga$ is $\Ga$-roughly conditionally negative (see Subsection \ref{subsec:intro:res}) whenever $I'$ is positive on $L^2_0(\nu)$.
\end{prop}

\begin{proof}

\begin{align*}
\half (b(y,x),c(x,y))
&=\half\int b_{x\eta}(y,x) d\nu(\eta)+\half\int b_{y\eta}(x,y) d\nu(\eta)\\
&=d_{\Ga}(x,y)-\int (\eta,x^{-1}y)d\nu(\eta)-\int (\eta,y^{-1}x)  d\nu(\eta)
\end{align*}

Lemma \ref{lem:fond'} implies:
\begin{align*}
&(I'(c(x,y)),c(x,y))
=\half (b(y,x),c(x,y))
+(\dd I'({\bf1}_{\dd\Ga})(x,y),c(x,y))\\
&=d_{\Ga}(x,y)-\int (\eta,x^{-1}y)d\nu(\eta)-\int (\eta,y^{-1}x)  d\nu(\eta)
+(\dd I'({\bf1}_{\dd\Ga})(x,y),c(x,y))
\end{align*}

Observe that 
$$|(\dd I'({\bf1}_{\dd\Ga})(x,y),c(x,y))|\le 2\nu(I'({\bf1}_{\dd\Ga}))+2\|I'({\bf1}_{\dd\Ga})\|_\infty$$
and together with Lemma \ref{lem:estim:log} proof it follows that:
$$r(x,y)\df (\dd I'({\bf1}_{\dd\Ga}),c(x,y))-\int (\eta,x^{-1}y)d\nu(\eta)-\int (\eta,y^{-1}x)  d\nu(\eta)$$
for $x,y\in \Ga$ is real, symmetric,  $\Ga$-invariant and bounded.

\end{proof}

\begin{rem}
Proposition \ref{prop:kvg} holds independently of whether $I'$ is positive on $L^2_0(\nu)$.
\end{rem}

A consequence of Proposition \ref{prop:kvg} together with \cite[Thm. 1]{Boucher:2020aa} is the following converse of Proposition \ref{prop:pos'}:
\begin{cor}\label{cor:thm2}
The family of Knapp-Stein operators $(I_s)_{s>\half}$ over $L^2(\nu)$ are positive if and only if $I'$ is positive over $L^2_0(\nu)$.
In this case the unitary representation $(H_1',\pi_1)$ is well defined and satisfies $H^1(H_1',\pi_1)\neq0$.
\end{cor}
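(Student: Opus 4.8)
The plan is to treat the equivalence and the cohomological conclusion separately, routing the hard direction through the geometric characterization of \cite{Boucher:2020aa}. One implication of the equivalence is immediate: if all $\mathcal{I}_s$, $s>\half$, are positive on $L^2(\dd X,\mu_o)$, then in particular they are positive for $s$ in a neighbourhood of $1$, so Proposition \ref{prop:pos'} already delivers the positivity of $\mathcal{I}'_{(o)}$ on $L^2(\dd X,\mu_o)\ominus\BC$. The content lies in the converse, and rather than manipulate the $\mathcal{I}_s$ directly I would deduce that the distance $d$ is roughly conditionally negative and then quote \cite{Boucher:2020aa} Theorem 1.

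First I would note that once $\mathcal{I}'_{(o)}$ is positive the form $Q'_1$ is a genuine inner product on $\mathcal{F}^0_1$, the completion $\mathcal{H}'_1$ is a bona fide Hilbert space, and the orbit map $\phi:\mathcal{Q}(\Lambda(G))\to\mathcal{H}'_1$, $x\mapsto c(o,x)=\mu_x-\mu_o$, satisfies $\phi(x)-\phi(y)=c(x,y)$. Consequently the kernel $N(x,y):=Q'_1(c(x,y))=\|\phi(x)-\phi(y)\|_{Q'_1}^2$ is, by Schoenberg's criterion, conditionally negative, and it is $G$-invariant because $c(g.x,g.y)=\pi_1(g)c(x,y)$ while $Q'_1$ is $\pi_1$-invariant. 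Proposition \ref{prop:kvg} then reads $d(x,y)=N(x,y)-r(x,y)$, where $-r$ is continuous, symmetric, $G$-invariant and converges along each $G$-orbit to the constant $-\k(G.x)$. This is precisely the decomposition required for $d$ to be roughly conditionally negative, with $N_0=N$, so \cite{Boucher:2020aa} Theorem 1 returns the positivity of every $\mathcal{I}_s$, $s>\half$, closing the equivalence.

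For the cohomological statement, positivity of $\mathcal{I}'$ ensures $(\mathcal{H}'_1,\pi_1)$ is well defined and that $c(g)=c(o,g.o)$ is an honest $\pi_1$-cocycle. Proposition \ref{prop:kvg} gives $\|c(g)\|_{Q'_1}^2=d(o,g.o)+r(o,g.o)$; since $r$ is bounded while $d(o,g.o)\to+\infty$ as $g$ leaves every finite subset of the discrete group $G$, the orbit $\{c(g)\}_{g\in G}$ is unbounded in $\mathcal{H}'_1$ (indeed $c$ is proper). By \cite{MR2415834} the associated affine action therefore has no fixed point and no bounded orbit, i.e. $c\notin B^1(G,\pi_1)$, whence $H^1(G,\pi_1)\neq0$; the same properness, combined with the mixing of $\mathcal{H}'_1$ from Lemma \ref{lem:mix'}, is what will upgrade this to non-vanishing \emph{reduced} cohomology in Theorem \ref{thm:main1}.

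I expect the main obstacle to be the converse half of the equivalence: recognizing that the bare positivity of $\mathcal{I}'$ suffices to realize $Q'_1(c(\cdot,\cdot))$ as a squared Hilbert distance, hence a conditionally negative kernel, and then checking that the remainder $r$ produced by Proposition \ref{prop:kvg} has exactly the orbitwise convergence demanded by the definition of roughly conditionally negative. Both points rest on promoting $\mathcal{H}'_1$ to a Hilbert space (so that $\phi$ is an actual map) together with the quantitative control on $r$; once these are secured, the appeal to \cite{Boucher:2020aa} Theorem 1 is formal.
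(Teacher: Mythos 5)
Your argument is correct and is exactly the route the paper intends: the paper states this corollary without a written proof, indicating only that it follows from Proposition \ref{prop:pos'} (the easy direction), Proposition \ref{prop:kvg} (which yields the decomposition $d = Q'_1(c(\cdot,\cdot)) - r$ exhibiting $d$ as roughly conditionally negative via Schoenberg), and \cite{Boucher:2020aa} Theorem 1, with the non-vanishing of $H^1$ coming from the properness of $c$. You have filled in precisely those steps.
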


In other words the operators $(I_s)_{s>\half}$ over $L^2(\dd X, \nu)$ are positive if and only if $d_\Ga$ is $\Ga$-roughly conditionally negative. In particular \cite[Thm. 1]{Boucher:2020aa} is tight.

\subsection{Special representations and reduced cohomology}$ $\\
Before investigating the reduced cohomology of $(H'_1,\pi_1)$ we recall some results about harmonic cocycles (see \cite{MR3831031} \cite{MR3854902} \cite{MR3711276} for more).

We call a probability measure $\beta$ on $\Ga$ {\it admissible} if it is finitely supported, symmetric and such that the $\gb$-random walk over $(\Ga,d_\Ga)$ has non-zero drift.
Observe that because $\Ga$ is assumed to be non-elementary, any probability supported on a generating set has non-zero drift \cite{MR1743100}.\\
If $(X_n)_n$ denotes a $\gb$-random walk on $(\Ga,d_\Ga)$ the last assumption is equivalent to say that the random variable $Z_n=\frac{d_\Ga(X_n,e)}{n}$ converges in $L^1(\gO,\BP)$ to a non-zero constant denoted $\ell$ \cite{MR1743100}.

Given a unitary representation $\pi$ and a admissible probability $\gb$. 
A cocycle with coefficients in $\pi$, $c_\pi$, is called $\beta$-harmonic if it satisfies:
$$\sum_{g\in\Ga} c_\pi(g)\beta(g)=0$$ or equivalently 
$$\sum_{g\in\Ga} c_\pi(hg)\beta(g)=c_\pi(h)$$ 
for all $h\in \Ga$.

The Hilbert norm  on the space of cocycles $Z^1(\Ga,\pi)\subset \text{Map}(\Ga,\mathcal{H}_\pi)$ defined as:
$$\|c_\pi\|^2_{\beta}\df\sum_{g\in\Ga}\|c_\pi(g)\|_\pi^2\beta(g)$$
induces an equivalent topology as the compact-open topology \cite{MR3854902} and the following $\|\,.\,\|_\gb$-orthogonal decomposition holds:
$$Z^1(\Ga,\pi)=B^1(\Ga,\pi)^\perp\oplus \ol{B^1(\Ga,\pi)}$$
Since
$$\sum_{g\in\Ga}(c_\pi(g),\xi-\pi(g)\xi)\beta(g)=2(\sum_{g\in\Ga} c_\pi(g)\beta(g),\xi)$$
for all $b\in Z^1(\Ga,\pi)$, we have the isometric identification:
$$B^1(\Ga,\pi)^\perp=Z_{\text{Harm}}^1(\Ga,\pi)$$
where $Z_{\text{Harm}}^1(\Ga,\pi)$ stands for the closed space of $\beta$-harmonic cocycles and thus:
$$Z_{\text{Harm}}^1(\Ga,\pi)\simeq \ol{H}^1(\Ga,\pi)=Z^1(\Ga,\pi)/{\ol{B^1(\Ga,\pi)}}$$

The following criterion was proved by Erschler and Ozawa in \cite{MR3854902}:
\begin{lem}
Given a unitary representation $\pi$ and $c_\pi\in Z^1(\Ga,\pi)$:
$$\lim_n\frac{\BE_\gb[\|c_\pi(X_n)\|_\pi^2]}{n}=\|\mathcal{P}_\text{Harm}[c_\pi]\|_\beta^2$$
where $\mathcal{P}_\text{Harm}:Z^1(\Ga,\pi)\rightarrow B^1(\Ga,\pi)^\perp=Z_{\text{Harm}}^1(\Ga,\pi)$ stands for the orthogonal projection over the $\gb$-harmonic part of $c_\pi$ and $(X_n)_n$ for a $\beta$-random walk on $\Ga$.
\end{lem}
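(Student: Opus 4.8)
The statement to establish is
$$\lim_n\frac{\BE[\|b_\pi(X_n)\|_\pi^2]}{n}=\|\mathcal{P}_\text{Harm}[b_\pi]\|_\beta^2,$$
so the plan is to exploit the orthogonal decomposition $Z^1(G,\pi)=B^1(G,\pi)^\perp\oplus\ol{B^1(G,\pi)}$ recalled just above, write $b_\pi=b_\text{h}+b_\text{c}$ with $b_\text{h}=\mathcal{P}_\text{Harm}[b_\pi]$ harmonic and $b_\text{c}$ a limit of coboundaries, and then track how the two pieces grow along the random walk $(X_n)_n$. The guiding principle is that the harmonic part grows \emph{linearly} in $n$ with rate exactly $\|b_\text{h}\|_\beta^2$, while the coboundary part contributes only a \emph{sublinear} correction, so it disappears after dividing by $n$ and sending $n\to\infty$.

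First I would treat the harmonic part. Writing $X_n=g_1g_2\cdots g_n$ as a product of i.i.d.\ $\beta$-distributed increments and using the cocycle identity $b_\text{h}(X_n)=\sum_{k=1}^{n}\pi(X_{k-1})b_\text{h}(g_k)$, I expand
$$\BE[\|b_\text{h}(X_n)\|_\pi^2]=\sum_{k,l}\BE\big[(\pi(X_{k-1})b_\text{h}(g_k),\pi(X_{l-1})b_\text{h}(g_l))\big].$$
The harmonicity condition $\int_G b_\text{h}(g)\,d\beta(g)=0$ forces the off-diagonal terms to vanish: conditioning on the past and using that the future increment has zero $\beta$-average kills the cross terms, so the sum collapses to the diagonal
$$\sum_{k=1}^{n}\BE\big[\|b_\text{h}(g_k)\|_\pi^2\big]=n\int_G\|b_\text{h}(g)\|_\pi^2\,d\beta(g)=n\,\|b_\text{h}\|_\beta^2.$$
This gives exactly the linear rate claimed, i.e. $\BE[\|b_\text{h}(X_n)\|_\pi^2]/n=\|\mathcal{P}_\text{Harm}[b_\pi]\|_\beta^2$ on the nose.

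The main obstacle is controlling the coboundary part $b_\text{c}\in\ol{B^1(G,\pi)}$ and the cross term $2\,\Re\,\BE[(b_\text{h}(X_n),b_\text{c}(X_n))]$, since $b_\text{c}$ need not itself be a coboundary but only a limit of coboundaries $\partial_{w_j}$. For an exact coboundary $\partial_w(g)=w-\pi(g)w$ one has $\|\partial_w(X_n)\|_\pi\le 2\|w\|_\pi$ uniformly in $n$, so $\BE[\|\partial_w(X_n)\|_\pi^2]/n\to 0$; the work is to pass this bound to the closure. I would approximate $b_\text{c}$ in the $\|\cdot\|_\beta$-norm by coboundaries $\partial_{w_j}\to b_\text{c}$, use that the $\beta$-norm is equivalent to the compact-open topology on $Z^1(G,\pi)$ (as recalled above) together with the subadditivity $\|b(X_n)\|_\pi\le\sum_k\|b(g_k)\|_\pi$ to get a uniform-in-$n$ linear growth bound $\BE[\|b_\text{c}(X_n)\|_\pi^2]\le C n$, and then refine: the harmonic projection of $b_\text{c}$ is zero, so the above diagonal computation applied to $b_\text{c}$ shows its quadratic growth rate is $\|\mathcal{P}_\text{Harm}[b_\text{c}]\|_\beta^2=0$, giving $\BE[\|b_\text{c}(X_n)\|_\pi^2]=o(n)$. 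Finally, Cauchy–Schwarz bounds the cross term by $2\big(\BE[\|b_\text{h}(X_n)\|_\pi^2]\big)^{1/2}\big(\BE[\|b_\text{c}(X_n)\|_\pi^2]\big)^{1/2}=O(\sqrt{n}\cdot\sqrt{o(n)})=o(n)$, so after dividing by $n$ only the harmonic contribution survives and the limit equals $\|\mathcal{P}_\text{Harm}[b_\pi]\|_\beta^2$, as required. The delicate point throughout is the interchange of the closure with the random-walk limit, which is precisely where the equivalence of the $\beta$-norm with the compact-open topology is indispensable.
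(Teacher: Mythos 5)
The paper does not actually prove this lemma --- it quotes it from Erschler--Ozawa --- so your argument has to stand on its own. Your treatment of the harmonic part is correct: the conditioning argument showing that the off-diagonal terms of $\sum_{k,l}\BE[(\pi(X_{k-1})b_h(g_k),\pi(X_{l-1})b_h(g_l))]$ vanish because $\int_G b_h\,d\beta=0$, giving $\BE[\|b_h(X_n)\|_\pi^2]=n\|b_h\|_\beta^2$ exactly, is right, and so is the Cauchy--Schwarz reduction of the cross term to the two diagonal estimates.

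The gap is in the coboundary part, and it is genuine. First, the bound $\BE[\|b_c(X_n)\|_\pi^2]\le Cn$ that you extract from the pointwise inequality $\|b(X_n)\|_\pi\le\sum_k\|b(g_k)\|_\pi$ is false as stated: squaring and taking expectations produces $\sim n^2$ cross terms $\BE[\|b_c(g_k)\|_\pi]\,\BE[\|b_c(g_l)\|_\pi]$, so this route only yields $O(n^2)$, i.e.\ sublinearity of the drift after approximation, not of the energy. Second, the subsequent ``refinement'' is circular: the diagonal collapse producing the rate $\|\mathcal{P}_\text{Harm}[\cdot]\|_\beta^2$ was derived only for harmonic cocycles (it used the vanishing of $\int b\,d\beta$ to kill the off-diagonal terms), so it cannot be applied to $b_c$, whose defining feature is that it is not harmonic. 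What is actually needed, and what constitutes the real content of the lemma, is an a priori bound $\lim_n\BE[\|b(X_n)\|_\pi^2]/n\le\|b\|_\beta^2$ valid for \emph{every} cocycle. One way to get it is the genuine subadditivity $u_{n+m}\le u_n+u_m$ of $u_n=\BE[\|b(X_n)\|_\pi^2]$, obtained by computing the cross term as $-2\,\Re\,(S_m\bar b,S_n\bar b)$ with $S_n=\sum_{k=0}^{n-1}M^k$, $M=\int_G\pi(g)\,d\beta(g)$ and $\bar b=\int_G b\,d\beta$, and checking it is $\le 0$ using the symmetry of $\beta$ and the spectral theorem for the self-adjoint contraction $M$. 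With that in hand, $b\mapsto\lim_n\sqrt{u_n(b)/n}$ becomes a seminorm dominated by $\|\cdot\|_\beta$ which vanishes on exact coboundaries, hence on $\ol{B^1(G,\pi)}$, and your decomposition scheme closes. Without some such uniform linear bound the passage from $B^1(G,\pi)$ to its closure cannot be made, and this is precisely the step your proposal leaves unproved.
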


\begin{cor}\label{cor:thm1}
Let $\beta$ be an admissible probability on $\Ga$ and $\Lambda\subset \Ga$ the subgroup generated by its support.
The reduced cohomology class $c|_\Lambda$ is non-trivial in $\ol{H}^1(\Lambda,\pi_1|_{\Lambda})$.
\end{cor}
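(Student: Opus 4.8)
The plan is to combine the Kuhn--Vershik type formula of Proposition \ref{prop:kvg} with the Erschler--Ozawa drift criterion. The key point is to compute the growth rate of $\|c(X_n)\|_{Q_1'}^2$ along the $\beta$-random walk and show it is strictly positive, which forces the harmonic projection $\mathcal{P}_{\text{Harm}}[c|_\Lambda]$ to be nonzero, hence $c|_\Lambda$ cannot be a coboundary even after closure.

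First I would recall that by Proposition \ref{prop:kvg} we have $Q_1'(c(x,y)) = d(x,y) + r(x,y)$ where $r$ is bounded; in particular $\|c(g)\|_{Q_1'}^2 = Q_1'(c(o,g.o)) = d(o,g.o) + r(o,g.o)$ with $|r| \le C$ uniformly. Applying this along the walk gives $\BE[\|c(X_n)\|_{Q_1'}^2] = \BE[d(o,X_n.o)] + \BE[r(o,X_n.o)]$. The error term is bounded by $C$, so dividing by $n$ and using the non-zero drift hypothesis (which says $\BE[d(X_n.o,o)]/n \to \ell > 0$, since $Z_n \to \ell$ in $L^1$) yields
\begin{equation*}
\lim_n \frac{\BE[\|c(X_n)\|_{Q_1'}^2]}{n} = \ell > 0.
\end{equation*}
Here I restrict $c$ to $\Lambda$ and use $\beta$ as an admissible probability on $\Lambda$; since $\beta$ is supported on (generators of) $\Lambda$ the random walk $(X_n)_n$ stays in $\Lambda$, and all the geometric estimates apply verbatim because $\Lambda \subset G$ acts on the same space.

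Next I would invoke the Erschler--Ozawa lemma stated just above the corollary: it identifies $\lim_n \BE[\|b_\pi(X_n)\|_\pi^2]/n$ with $\|\mathcal{P}_{\text{Harm}}[b_\pi]\|_\beta^2$. Taking $\pi = \pi_1|_\Lambda$ and $b_\pi = c|_\Lambda$, the computation above shows this limit equals $\ell \neq 0$, so $\|\mathcal{P}_{\text{Harm}}[c|_\Lambda]\|_\beta^2 = \ell > 0$, meaning the harmonic part of $c|_\Lambda$ is nonzero. Under the isometric identification $Z^1_{\text{Harm}}(\Lambda,\pi_1|_\Lambda) \simeq \ol{H}^1(\Lambda,\pi_1|_\Lambda)$ recalled before the corollary, a nonzero harmonic projection is exactly the statement that the class of $c|_\Lambda$ is nontrivial in reduced cohomology. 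This completes the argument.

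The main obstacle I expect is ensuring the applicability of the drift hypothesis in the precise normalization required, and in particular that the restriction of the unitary structure $Q_1'$ and the representation $\pi_1$ to $\Lambda$ is legitimate so that $c|_\Lambda$ genuinely lands in $Z^1(\Lambda,\pi_1|_\Lambda)$ with finite $\|\cdot\|_\beta$-norm. The latter is immediate from the uniform bound $\|c(g)\|_{Q_1'}^2 \asymp d(o,g.o)$ together with the finite support of $\beta$, so every $\|c(g)\|$ over the support is finite; the subtlety is merely bookkeeping. The genuinely essential input is Proposition \ref{prop:kvg}, which converts the abstract cocycle norm into the metric quantity $d(x,y)$ up to a bounded symmetric kernel --- without that identification one could not extract the linear growth that drives the positivity of the drift.
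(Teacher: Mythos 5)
Your proposal is correct and follows essentially the same route as the paper: apply Proposition \ref{prop:kvg} to identify $\BE[\|c(X_n)\|_{Q_1'}^2]$ with $\BE[d(o,X_n.o)]$ up to a bounded error, divide by $n$ to obtain the positive drift $\ell(\beta)$, and conclude via the Erschler--Ozawa identification of this limit with $\|\mathcal{P}_{\text{Harm}}[c|_\Lambda]\|_\beta^2$ and the isometry $Z^1_{\text{Harm}}\simeq\ol{H}^1$. The only cosmetic difference is that the paper cites Kingman's subadditive theorem where you invoke the $L^1$ convergence of $d(X_n.o,o)/n$ directly from the admissibility hypothesis; these amount to the same input.
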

\begin{proof}
As a consequence of  Kingman's subadditive theorem (see \cite{MR1743100}) together with Proposition \ref{prop:kvg} one has:
\begin{align*}
\lim_n\frac{1}{n}\BE_\gb[\|c(X_n)\|_{H_1'}^2]&=\lim_n\frac{1}{n}\BE_\gb[d_\Ga(e,X_n)]\\
&=\lim_n\frac{\BE_\gb[d_\Ga(e,X_n)]}{n}=\ell(\beta)
\end{align*}
where $\ell(\beta)$ stands for the drift of $\beta$ on $(\Ga,d_\Ga)$.
It follows that $\|\mathcal{P}_\text{Harm}[c]\|_\beta^2=\ell(\beta)>0$ and as $\ol{H}^1(\Lambda,\pi|_{\Lambda})\simeq Z_{\text{Harm}}^1(\Lambda,\pi|_{\Lambda})$ the cocycle $c$ has non trivial  reduced cohomology class.
\end{proof}

\section{Irreducibility of the special affine actions}\label{sec:affine}
We denote $c(g)\df c(g,e)$ and $b(g)=b_.(g,e)$ the Buseman cocycle for $g\in \Ga$.
In this section we investigate the irreducibility of the affine action $(H_1',\pi_1,c)\simeq(H_0',\pi_0,b) $ and prove Theorem \ref{thm:main2} and \ref{thm:main3}.

We use the following criterion proved by Bekka, Pillon and Valette in \cite{MR3549540}:
\begin{prop}[B-P-V \cite{MR3549540}]
An isometric affine action $(H,\pi,c_\pi)$ is irreducible if and only if for every $w\in H$, $\ol{{\bf Span}\{c_\pi(g)+\dd_w(g)\,|\,g\in \Ga\}}=H$ or equivalently if and only if the affine commutant of the $\Ga$-affine action is reduced to translations by $\pi$-fixed vectors.
\end{prop}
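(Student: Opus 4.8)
The plan is to prove the three stated conditions equivalent: (i) affine irreducibility, (ii) the spanning condition, and (iii) triviality of the affine commutant. Throughout I write $A=A_{b_\pi}$ and $b=b_\pi$, so that $A(g)v=\pi(g)v+b(g)$, and I denote by $\pi(G)'$ the commutant of $\pi$, a von Neumann algebra since $\pi$ is unitary. For the equivalence (i)$\Leftrightarrow$(ii) I would describe the $G$-stable closed affine subspaces through a point. Given $p\in\mathcal{H}$ set $V_p=\ol{{\bf Span}\{A(g)p-p\,|\,g\in G\}}$. A direct computation gives $A(g)p-p=b(g)+\dd_{-p}(g)$, and from $A(hg)p-A(h)p=\pi(h)(A(g)p-p)$ one checks $\pi(h)(A(g)p-p)=(A(hg)p-p)-(A(h)p-p)\in V_p$, so $V_p$ is $\pi$-invariant and $p+V_p$ is the smallest $G$-stable closed affine subspace containing $p$. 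Since any $G$-stable closed affine subspace through $p$ contains the whole orbit and hence $p+V_p$, the action is irreducible iff $V_p=\mathcal{H}$ for every $p$; substituting $p=-w$ converts this into condition (ii).

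Next I would compute the affine commutant explicitly. An affine map $T(v)=Lv+\eta$ commutes with every $A(g)$ iff $L\in\pi(G)'$ and $(L-I)b(g)=\pi(g)\eta-\eta$ for all $g$. The translations by $\pi$-fixed vectors are exactly the solutions with $L=I$; conversely, if $L=I$ this relation forces $\pi(g)\eta=\eta$, so such an $\eta$ is automatically fixed. Hence the affine commutant is nontrivial precisely when it contains an element with $L\ne I$. The implication (iii)$\Rightarrow$(i) is then immediate in contrapositive form: if $C=w_0+V$ is a proper $G$-stable closed affine subspace, then $V\subsetneq\mathcal{H}$ is closed and $\pi$-invariant and $b(g)-\dd_{w_0}(g)\in V$. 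Let $P$ be the orthogonal projection onto $V^\perp\ne0$, which lies in $\pi(G)'$; then $Pb(g)=\dd_{Pw_0}(g)$, so $(L,\eta)=(I-P,\,Pw_0)$ is an affine commutant element with linear part $I-P\ne I$, i.e. a nontrivial one.

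The remaining implication (i)$\Rightarrow$(iii), equivalently ``nontrivial commutant $\Rightarrow$ reducible,'' is the heart of the matter. Given a nontrivial element $T=(L,\eta)$ put $M=L-I\ne0$, so $Mb(g)=\pi(g)\eta-\eta$. If $T$ has a fixed point $v_0$ — equivalently $-\eta\in\im{M}$ — then $\mathrm{Fix}(T)=v_0+\Ker(M)$ is a nonempty $G$-stable closed affine subspace which is proper because $M\ne0$ forces $\Ker(M)\ne\mathcal{H}$, giving reducibility; in particular, when $M$ is invertible one has $b=\dd_{-M^{-1}\eta}\in B^1(G,\pi)$ and the action has a global fixed point. \textbf{The main obstacle} is the complementary case, in which $M$ has non-closed range and $T$ has no fixed point, so no invariant affine subspace is produced directly by $T$. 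I would handle it by passing to the linearisation on $\mathcal{H}\oplus\BC$, namely the representation $\rho(g)(v,z)=(\pi(g)v+z\,b(g),z)$ realising the extension $0\to\pi\to\rho\to\BC\to0$: under this correspondence $G$-stable closed affine subspaces of $\mathcal{H}$ become $\rho$-invariant closed subspaces meeting the hyperplane $\mathcal{H}\times\{1\}$, and $T$ becomes a $\rho$-intertwiner. One then localises $M$ by a spectral projection of $M^{*}M$ in the von Neumann algebra $\pi(G)'$, reducing on a nonzero $\pi$-invariant summand to the invertible (fixed-point) situation treated above; this yields a proper $G$-stable closed affine subspace and completes the chain of equivalences.
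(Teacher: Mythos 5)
The paper offers no proof of this proposition: it is imported verbatim from Bekka--Pillon--Valette \cite{MR3549540}, so there is no internal argument to compare against. Your blind reconstruction is correct and follows essentially the same route as the original: the identification of the smallest $G$-stable closed affine subspace through $p$ with $p+\ol{{\bf Span}\{A(g)p-p\}}$ gives (i)$\Leftrightarrow$(ii), and the computation $(L-I)b_\pi(g)=\pi(g)\eta-\eta$, $L\in\pi(G)'$, reduces (iii) to the study of $M=L-I$ inside the von Neumann algebra $\pi(G)'$.

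Two remarks on the places where you compress. First, the only genuinely delicate step, (i)$\Rightarrow$(iii) when $M$ has non-closed range and $T$ has no fixed point, does close along the lines you indicate, and more directly than ``reducing to the invertible case'': apply $M^*$ to get $M^*M\,b_\pi(g)=-\dd_{M^*\eta}(g)$, pick $\epsilon>0$ with $E=\chi_{[\epsilon^2,\infty)}(M^*M)\neq 0$, and set $w_1=h(M^*M)M^*\eta$ with $h(t)=t^{-1}\chi_{[\epsilon^2,\infty)}(t)$; since $E$ and $h(M^*M)$ lie in $\pi(G)'$ this yields $Eb_\pi(g)=-\dd_{w_1}(g)$ with $w_1\in E\mathcal{H}$, hence $E\bigl(b_\pi(g)+\dd_{w_1}(g)\bigr)=0$ for all $g$ and condition (ii) fails on the nonzero summand $E\mathcal{H}$. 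In particular the detour through the linearisation on $\mathcal{H}\oplus\BC$ is not needed. Second, a cosmetic point: your witness $(I-P,\,Pw_0)$ in (iii)$\Rightarrow$(i) is a bounded affine map but not an isometry; if the affine commutant is taken inside the affine isometry group, use the reflection $(I-2P,\,2Pw_0)$ instead, which satisfies the same intertwining relation. With these adjustments the chain of equivalences is complete.
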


Since $C(\dd \Ga)\cap L^2_0(\nu)$ is dense in $H'_1$ by construction, Theorem \ref{thm:main2} together with the above B-P-V criterion imply the irreducibility of the affine action $(H_1',\pi_1,c)$. 
Since $H'_1$ does not have any fixed vector due to Lemma \ref{lem:mix'}, Theorem \ref{thm:main3} also follows. \\

The main step consists to reduce the proof of Theorem \ref{thm:main2} to the case where $u\in L^2_0(\nu)$ and $w=0$. 
This will be a consequence of the following technical lemmas:

\begin{lem}\label{lem:alphabond}
Let $(\nu_n)_n$ be as in Proposition \ref{prop:equid}. The map:
$$h\in \Ga\mapsto\sum_\Ga |(b(g),c(h))| \nu_{n}(g)$$
is uniformly bounded on $\Ga$.
\end{lem}
\begin{proof}
Observe that 
$$(b(g),\vp)=2\int_{\dd \Ga} (g,\xi)\vp(\xi)d\nu(\xi)$$ 
whenever $\vp\in L^2_0(\nu)$ for all $g\in \Ga$.
It follows:
\begin{align*}
&\sum_\Ga|(b(g),c(h))| \nu_n(g)
=2\sum_\Ga|\int_{\dd \Ga}(\xi,g)(e^{-Db_\xi(h,e)}-1)d\nu(\xi)|\nu_n(g)\\
&\le 2\int_{\dd \Ga}[\sum_\Ga(\xi,g)\nu_n(g)]e^{-Db_\xi(h,e)}d\nu(\xi)
+2\sum_\Ga\int_{\dd \Ga}(\xi,g)d\nu(\xi)\nu_n(g)\\
&\le 2\|\sum_\Ga(.,g)\nu_n(g)\|_\infty+2\|\int_{\dd \Ga}(\xi,.)d\nu(\xi)\|_\infty
\end{align*}
The result follows from Lemma \ref{lem:tech:count} and the proof of Lemma \ref{lem:estim:log}. 
\end{proof}

\begin{cor}\label{cor:bound1}
Let $(\nu_n)_n$ be as in Proposition \ref{prop:equid}.
Given $w\in H_1'$ the family of operators:
$$A_n(w):C^0(\ol{\Ga})\rightarrow H_1';\quad f\mapsto\sum_\Ga f(g)\nu_{n}(g)(c(g)+\dd_w(g))$$
is uniformly bounded on $n\in\BN$.
\end{cor}
\begin{proof}
Since the co-boundary $\dd_w$ is bounded we can assume $w=0$.

Observe that:
\begin{align*}
\|&\sum_\Ga f(g)\nu_n(g)c(g)\|^2=\sum_{\Ga\times \Ga}f(g)f(h)(I'(c(g)),c(h))\nu_n(g)\nu_n(h)\\
&\le\|f\|_\infty^2\sum_{\Ga\times \Ga}|(I'(c(g)),c(h))|\nu_n(g)\nu_n(h)\\
&\le \|f\|_\infty^2\sum_{\Ga\times \Ga}|(b(g),c(h))|\nu_n(g)\nu_n(h)\\
&+\|f\|_\infty^2\sum_{\Ga\times \Ga}|(\dd I'({\bf1}_{\dd \Ga})(g,e),c(h))|\nu_n(g)\nu_n(h)
\end{align*}
which is uniformly bounded on $n$ due to Lemma \ref{lem:alphabond} and the fact that $\sup_g\|\dd I'({\bf1}_{\dd \Ga})(g,e)\|_\infty\le 2\|I'({\bf1}_{\dd \Ga})\|_\infty$.
\end{proof}

As $(\pi_1,H_1')$ is mixing:
\begin{lem}\label{lem:affmix}
Let $(\nu_n)_n$ be as in Proposition \ref{prop:equid}.
Given any $u,v\in H_1'$ and $f\in C(\ol{\Ga})$ with $\nu(f)=0$:
$$\lim_n\sum_\Ga f(g)(I'(\dd_{u}(g)),v)\nu_{n}(g)=0$$
\end{lem}
\begin{proof}
By definition:
\begin{align*}
\sum_\Ga f(g)&(I'(\dd_{u}(g)),v)\nu_{n}(g)=(I'(u),v)\sum_\Ga f(g)\nu_{n}(g)\\
&-\sum_\Ga(f(g)(I'(\pi_1(g)u),v))\nu_{n}(g)
\end{align*}
for all $u,v\in \mathcal{H}_1'$ and $f\in C(\ol{\Ga})$ with $ \nu(f)=0$.
It follows from Lemma \ref{lem:mix'} and Proposition \ref{prop:equid}:
\begin{align*}
\lim_n(I'(u),v)\sum_\Ga&f(g)\nu_{n}(g)-\sum_\Ga[f(g)(I'(\pi_1(g))u,v)]\nu_{n}(g)\\
&=(I'(u),v)\nu(f)-\nu(0)=0
\end{align*}

Indeed the function $g\mapsto f(g)(I'(\pi_1(g))u,v)$ extends continuously over the boundary by zero.
\end{proof}

We reduced Theorem \ref{thm:main2} proof to the following Proposition:
\begin{prop}\label{lem:c01}
Let $(\nu_n)_n$ be as in Proposition \ref{prop:equid}.
Given any $f\in C^0(\ol{\Ga})$ with $\nu(f)=0$ and $\vp\in  L^2_0(\nu)$:
$$\lim_n\sum_\Ga f(g)(I'(c(g)),\vp)\nu_n(g)=2(I'(f|_{\dd X}),\vp)$$
\end{prop}
Indeed, using Lemma \ref{lem:affmix}, we can assume $w=0$.
On the other hand Corollary \ref{cor:bound1} allows to extend the weak convergence of $$v_n=\sum_\Ga f(g)c(g)\nu_{n}(g)$$ with $n\in \BN$, on the dense subspace $L^2_0(\nu)$ to $H_1'$.

\begin{proof}[Proof of Proposition \ref{lem:c01}]
As consequence of Lemma \ref{lem:fond'}, we have
$$(I'(c(g)),\vp)=2I'(\vp)(g)+(\dd I({\bf1}_{\dd \Ga})(g,e),\vp)$$
for $g\in \Ga$ and $\vp\in  L^2_0(\nu)$.

It follows:
\begin{align*}
I_n&\df\sum_\Ga f(g)(I'(c(g)),\vp)\nu_{n}(g)\\
&=2\sum_\Ga f(g)I'(\vp)(g)\nu_{n}(g)\\
&+\sum_\Ga f(g)(\pi_0(g)I'({\bf1}_{\dd \Ga}),\vp)\nu_n(g)
+(I'({\bf1}_{\dd \Ga}),\vp)\sum_\Ga f(g)\nu_n(g)
\end{align*}

Observe that $g\mapsto f(g)(\pi_0(g)I'({\bf1}_{\dd \Ga}),\vp)$ extends to zero on the boundary $\dd \Ga$ on $\Ga$.
Indeed, given a proper sequence $(g_n)_n$ in $\Ga$ one can assume that $g_n\rightarrow \eta$ and $g_n'\rightarrow\eta'$ with $\eta,\eta'\in\dd\Ga$ and dominated convergence (as $I'({\bf1}_{\dd \Ga})\in L^\infty(\dd \Ga)$):
$$f(g_n)\int_{\dd \Ga}I'({\bf1}_{\dd \Ga})(g_n^{-1}\xi)\vp(\xi)d\nu(\xi)
\rightarrow f(\eta)I'({\bf1}_{\dd \Ga})(\eta')\nu(\vp)=0$$

Using Proposition \ref{prop:equid} together with Lemma \ref{lem:int} we get:
\begin{align*}
\lim_nI_n&=2(I'(f|_{\dd X}),\vp)-\nu(f)\int I'({\bf1}_{\dd \Ga})(\xi)\ol{\vp(\xi)}d\nu(\xi)\\
&=2(I'(f|_{\dd X}),\vp)
\end{align*}
for all $\vp\in L^2_0(\nu)$.
\end{proof}

\appendix
\section{Positive operators on reflexive Banach spaces.}\label{app:banach}
Let $V$ be a real separable reflexive Banach space and $T$ a bounded operator between $V$ and $V'$.\\
We call a bounded operator $T$ on $V$ {\it self-adjoint} if for all $x,y\in V$, $T(x)(y)=T(y)(x)$, in other words $T':V\simeq (V')'\rightarrow V'$ identifies with $T$.\\
\begin{rem}
Recall that if $W\subset V$ denote a closed subspace of $V$, then $W$ and $V/W$ are also reflexive.
Moreover the dual of $V/W$ is isomorphic to the annihilator of $W$ \cite{MR1070713}.
As the adjoint of $T$ takes value in the annihilator of the kernel of $T$, one can assume $T$ has trivial kernel whenever $T$ is self-adjoint.
Moreover it follows from Hahn-Banach that $T$ has dense range in this case.
\end{rem}
A self-adjoint operator $T$ is {\it positive} if the quadratic form $v\in V\mapsto Q_T(v)\df T(v)(v)$ is positive definite on $V$.\\
Given a positive operator the non-degenerated bilinear form, $Q_T$, induces a real Hilbert space, $H_T(V)$, by  completion of $V$ with respect to $Q_T$.
As $T$ is bounded the inclusion $v\in V\mapsto [v]\in H_T(V)$ is bounded with norm at most $\|T\|_\text{op}$.\\
Conversely given a (injective) bounded operator, $S$, from $V$ into a Hilbert space $H$, the operator $T\df S'S$ is positive on $V$.

\begin{rem}
Every separable Banach space embeds into a Hilbert space.
\end{rem}

\begin{prop}\label{prop:ext:noequi}
Let $T$ be a positive operator on $V$.
There exists a dense Hilbert subspace, $H_T'$, in $V'$ that contains the range of $T$ such that $T:V\rightarrow V'$ extends as an isometry between $H_T$ and $H_T'$ and duality between $V$ and $V'$  as a non-degenerated bilinear form between $H_T$ and $H_T'$.
\end{prop}
\begin{proof}
The operator $T$ is bijective between $V$ and $R\subset V'$ with $R$ the range of $T$.\\
Given $y\in R$ and $x\in V$ such that $T(x)=y$ we define $Q(y)\df T(x)(x)$.
The space $H_T'$ is obtained from the completion of $R$ with respect to $Q$.
It follows from the definition that $\|T(x):H_T'\|=\|x|H_T\|$, i.e $T$ induces an isometry between $H_T'$ and $H_T$.
For all $x\in V$ it follows from Hahn-Banach that one can find $x'\in V$ with $\|x'\|=1$ such that
$$\|T(x)|V'\|=T(x)(x')\le \sqrt{T(x)(x)}\sqrt{T(x')(x')}\le\|T\|_\text{op}\|x|H_T\|=\|T(x):H_T'\|$$
and thus $R\subset H_T'\subset V'$.
On the other hand for all $T(x')\in R$ and $x\in V$:
\begin{align*}
|T(x')(x)|^2
&\le_\text{C-S ineq.}\ol{T(x')}(x')\ol{T(x)}(x)\\
&=\|x|H_T\|^2\|T(x')|H_T'\|^2
\end{align*} 
which proves the extension of the duality.
\end{proof}

\begin{exam}\label{exam:pos:op:reflexive}
Let $(\BR^n,\gl)$ with $n\ge 1$ and $0<\ga< n$.
It follows from the Hardy-Littlewood-Sobolev inequality \cite[Thm. 4.3]{MR1817225} that 
$$I_\ga(\vp)(y)=\int_{\BR^n}f(x)\|x-y\|^{-n+\ga}d\gl(x)$$ is well defined for $f\in L^p(\gl)$ with $p=\frac{2n}{n-\ga}$ and takes value in $L^q(\gl)$ with $q=\frac{2n}{n+\ga}$.
It follows from the semi-multiplicative property: $I_\ga^2=c(\ga)I_{2\ga}$ with $0<\ga<\frac{n}{2}$ and $c(\ga)>0$ \cite[(25.38)]{MR1347689}, 
that $I_\ga$ is positive in the above sense.
In this case the Hilbert associated is the nothing the real space of $-\ga$-potential on $\BR^n$, i.e $H_{I_\ga}(L^p(\gl))\simeq H^{-\ga}(\BR^n)$.
\end{exam}

Let $(\pi,V)$ be a representation on $V$ by bounded operators and $(\pi',V')$ its dual representation.
Any positive intertwiner, $T$, between $\pi$ and $\pi'$ induces a positive $\pi$-invariant quadratic form on $V$ and therefore an isometric representation on $H_T$ which makes the inclusion $V\rightarrow H_T$ an intertwiner.

Similarly as in the non-equivariant case any bounded intertwiner between $(\pi,V)$ and an isometric representation on a Hilbert space induces a intertwiner between $(\pi,V)$ and $(\pi',V')$.

\begin{prop}\label{prop:ext:equi}
Let $T$ be a positive intertwiner for $(\pi,V)$.
There exists an isometric representation on a dense Hilbert space, $H_T'$, in $V'$ such that $T:V\rightarrow V'$ extends as an isometric intertwiner between $H_T$ and $H_T'$ and the duality bracket extends to $H_T\times H_T'$ as a $\Ga$-invariant non-degenerated bilinear form.
\end{prop}
\begin{proof}
The result essentially follows from the Proposition \ref{prop:ext:noequi} by observing that the dual structure itself induces a unitary representation for $\pi'$.
\end{proof}

\begin{exam}
Given $(\BR^n,\gl)$ with $n\ge 1$,  $0<\ga< n$ and $I_\ga$ as in Example \ref{exam:pos:op:reflexive}.
The group of Euclidean motions, $G=SO(n)\ltimes \BR^n$, of $\BR^n$ preserving the Lebesgue measure act by isometries on $L^p(\gl)$ for all $p\in [1,\infty]$ and commute with operator $I_\ga$. It induces isometric representations on $H^{-\ga}(\BR^n)$
\end{exam}

\nocite{*}
\bibliographystyle{plain}
\bibliography{complement2022}
\end{document}